\journal{Topology and its Applications}
\newtheorem{definition}{Definition}[section]
\newtheorem{lemma}[definition]{Lemma}
\newtheorem{theorem}[definition]{Theorem}
\newtheorem{corollary}[definition]{Corollary}
\newdefinition{example}[definition]{Example}
\newdefinition{remark}[definition]{Remark}
\newdefinition{problem}[definition]{Problem}
\newdefinition{question}[definition]{Question}
\newdefinition{fact}[definition]{Fact}
\newproof{pot}{Proof}
\begin{document}

\begin{frontmatter}
\title{A property equivalent to being semi-Kelley}
\author{Mauricio Chac\'on-Tirado\corref{cor1}}
\ead{mauricio.chacon@correo.buap.mx}
\author{Mar\'ia de J. L\'opez}
\ead{mjlopez@fcfm.buap.mx}
\cortext[cor1]{Corresponding author}
\address{Benem\'erita Universidad Aut\'onoma de Puebla, Facultad de Ciencias F\'isico-Matem\'aticas,
Av. San Claudio y 18 sur, Col. San Manuel, Edificio FM1, Ciudad Universitaria C.P. 72570, Puebla, Puebla, M\'exico.}

\author{Ivon Vidal-Escobar}
\ead{piveavis@gmail.com}

\begin{abstract}
We present a property equivalent to the property of being semi-Kelley. Using this equivalence we prove that being semi-Kelley is a hereditary property for atriodic continua. We prove that semi-Kelley remainders are atriodic, moreover, we prove that semi-Kelley continua are semi-Kelley remainders for chainable continua, circularly chainable continua, and arc continua, and we give an example of an atriodic Kelley continuum which is a semi-Kelley remainder and not a Kelley remainder. We also prove that hereditarily semi-Kelley dendroids are smooth.
\end{abstract}

\begin{keyword}
Atriodic continuum \sep continuum \sep compactification of \((0,1]\) \sep dendroid \sep property of Kelley \sep property of semi-Kelley.
\MSC[2020] Primary \sep 54B20; Secondary \sep 54F15.
\end{keyword}

\end{frontmatter}

\section{Introduction}

A \emph{continuum} is a compact connected metric space with more than one
point. A \emph{subcontinuum} of a continuum \(X\) is a non-empty compact connected
subset of \(X\). For a subset \(A\) of \(X\), let \(cl(A)\) and \(bd(A)\) denote the closure of \(A\) in \(X\) and the boundary of \(A\) in \(X\), respectively.

Given a continuum \(X\), let \(C(X)\) denote the hyperspace of subcontinua of \(X\), endowed with the Hausdorff metric \cite[Definition 2.1]{IN99}.

Recall that a continuum \(X\) is \emph{Kelley at a point \(p\in X\)} provided that
for each subcontinuum \(K\) of \(X\) containing \(p\) and for each sequence \(\{p_{n}\}_{n=1}^{\infty }\) in \(X\)
converging to \(p\), there exists a sequence \(\{K_{n}\}_{n=1}^{\infty }\)
of subcontinua of \(X\) converging to \(K\) (in the Hausdorff metric) such
that \(p_{n}\in K_{n}\), for every \(n\in \mathbb{N}.\) By \cite[Theorem 2.3]{W77}, any continuum is Kelley at each point of a dense \(G_\delta\)-set. A continuum is \emph{Kelley} provided that it is Kelley at each of its points. The property of Kelley was first defined by J. L. Kelley in \cite{K42} to study contractibility of hyperspaces.

Let \(K\) be a subcontinuum of a continuum \(X\). A \emph{maximal limit continuum of }\(K\) is a subcontinuum \(M\subset K\) provided that there is a sequence \(\{M_{n}\}_{n=1}^{\infty }\) of subcontinua of \(X\)
converging to \(M\) such that for each convergent sequence \(\{M_{n}^{\prime }\}_{n=1}^{\infty }\) of subcontinua of \(X\) with \(M_{n}\subset M_{n}^{\prime }\), for each \(n\in \mathbb{N}\) and \(\lim M_{n}^{\prime
}=M^{\prime }\subset K\), we have \(M^{\prime }=M\). A continuum \(X\) is \emph{semi-Kelley} provided that for each subcontinuum
\(K\) of \(X\) and for every two maximal limit continua \(L\) and \(M\)
of \(K\) either \(L\subset M\) or \(M\subset L\). Semi-Kelley continua were introduced by J. J.
Charatonik and W. J. Charatonik in \cite{CC98} as a weaker version
of Kelley continua. An interesting problem in this area is to determine which known results for Kelley continua can
be extended to semi-Kelley continua, see \cite{CENV22}, \cite{C03}, \cite{CC98}, \cite{FP19}, and \cite{I21}.

Besides this introduction, this paper contains four more sections.

\textbf{Section 2}. In this section we state an equivalence to the property of being Kelley and we also state an equivalence to the property of being semi-Kelley. These equivalences give a closer relationship between Kelley continua and semi-Kelley continua.

\textbf{Section 3}. A subcontinuum \(Y\) of a continuum \(X\) is a \emph{neighborhood retract} provided that there exist an open set \(U\) of \(X\) with \(Y\subset U\) and a retraction \(r:U\to Y\). A subcontinuum \(Y\) of \(X\) is called {\it semi-terminal} provided that for each \(A,B\in C(X)\), with \(A\cap B=\emptyset\), \(A\cap Y\neq\emptyset\neq B\cap Y\), we have that \(A\subset Y\) or \(B\subset Y\). For \(n\in\mathbb N\cup\{\infty\}\), a continuum \(X\) is called an {\it \(n\)-od}, provided that \(X\) contains a subcontinuum \(B\), called the {\it core} of \(X\), such that \(X\setminus B\) has at least \(n\) components. A continuum is \emph{atriodic} provided that it contains no 3-ods. In this section we study what kind of subcontinua of semi-Kelley continua are also semi-Kelley. If \(X\) is a semi-Kelley continuum and \(Y\) is a subcontinuum of \(X\), then \(Y\) is semi-Kelley in the following cases: a) \(Y\) is a neighborhood retract of \(X\), b) \(Y\) is semi-terminal, and c) \(X\) is atriodic.

\textbf{Section 4}. A \emph{compactification} of \((0,1]\) with \emph{remainder} \(R\) is a continuum \(X=(0,1]\cup R\), where \((0,1]\) is dense in \(X\) and \(R\) and \((0,1]\) are disjoint.
A \emph{Kelley compactification of \((0,1]\)}, respectively \emph{semi-Kelley compactificacion of \((0,1]\)}, is a Kelley continuum, respectively semi-Kelley continuum, which is also a compactification of \((0,1]\). A continuum \(X\) is a \emph{Kelley remainder}, respectively \emph{semi-Kelley remainder}, provided that it is the remainder of a Kelley compactification of \((0,1]\), respectively semi-Kelley compactification of \((0,1]\). Kelley compactifications were studied in \cite{AI00} and \cite{P05}; Kelley remainders were studied in \cite{BC08} and
\cite{C11}. G. Acosta and A. Illanes showed that if \(X\) is a Kelley
compactification then \(X\) is
atriodic and each subcontinuum of \(X\) is a Kelley continuum
\cite[Theorem 6.2 and Theorem
6.3]{AI00}. P. Pellicer-Covarrubias proved that a continuum \(X\) is hereditarily indecomposable if and only if each
compactification of \((0,1]\) with remainder \(X\) is a
Kelley compactification \cite[Corollary 7.2]{P05}. R. A. Beane and W. J. Charatonik showed that chainable Kelley
continua and Kelley arc continua are Kelley remainders \cite[Theorem 2.3
and Theorem 3.1]{BC08}. M. E. Chac\'{o}n-Tirado proved that circularly chainable Kelley
continua are Kelley remainders \cite[Theorem 1]{C11}. In this section we study what kind of semi-Kelley continua are semi-Kelley remainders. We prove that semi-Kelley remainders are atriodic. We prove that a semi-Kelley continuum is a semi-Kelley remainder for chainable continua, circularly chainable continua, and arc continua. We give an example of an atriodic Kelley continuum which is a semi-Kelley remainder and not a Kelley remainder.

\textbf{Section 5}. A {\it dendroid} is an arc-wise connected
continuum \(X\) such that the intersection of any two subcontinua of \(X\) is
connected. A point \(p\) of a dendroid \(X\) is a \emph{ramification point} provided that there exist three arcs in \(X\) whose pairwise intersection is equal to \(\{p\}\). A \emph{fan} is a dendroid with exactly one ramification point. Given a dendroid \(X\) and \(a,b\in X\), let \(ab\) be the arc in \(X\) with end points \(a\) and \(b\). A dendroid \(X\) is {\it smooth} provided that there exists a point \(p \in X\) such that for each sequence \(\{x_n\}_{n=1}^{\infty}\) in \(X\) converging to some \(x\in X\),
\(\lim px_n = px\). A continuum \(X\) is \emph{hereditarily Kelley}, respectively \emph{hereditarily semi-Kelley}, provided that each subcontinuum of \(X\) is Kelley, respectively semi-Kelley. In this section we prove that hereditarily semi-Kelley dendroids are smooth, compare with \cite[Theorem 6.4]{I21} which states that hereditarily semi-Kelley fans are smooth.

\section{An equivalence to the property of being semi-Kelley}
In this section we introduce an equivalence to the property of being semi-Kelley. Recall that a continuum \(X\) is {\it irreducible between \(p\) and \(q\)} provided that \(p,q\in X\) and no proper subcontinuum of \(X\) contains \(p\) and \(q\). We say that \(X\) is {\it irreducible} if there exist \(p\) and \(q\) such that \(X\) is irreducible between \(p\) and \(q\).

The proof of the following theorem is easy and we leave it to the reader.
\begin{theorem}
	Let \(X\) be a continuum. Then, \(X\) is Kelley if and only if for every \(a\in X\), for every \(I\in C(X)\) irreducible between \(a\) and some other point of \(X\), and for every sequence \(\{a_n\}_{n=1}^\infty\) in \(X\) converging to \(a\), there exists a sequence \(\{A_n\}_{n=1}^\infty\) in \(C(X)\) converging to \(I\) such that \(a_n\in A_n\) for each \(n\in\mathbb N\).
\end{theorem}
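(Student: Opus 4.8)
The plan is to prove the nontrivial (``if'') direction, since the ``only if'' direction is immediate: if \(X\) is Kelley, then the approximation property holds for \emph{every} subcontinuum containing \(a\), and in particular for every \(I\in C(X)\) that is irreducible between \(a\) and some other point.

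For the converse, assume the stated property and fix \(a\in X\), a subcontinuum \(K\) of \(X\) with \(a\in K\), and a sequence \(\{a_n\}_{n=1}^\infty\) converging to \(a\); I must produce a sequence \(\{K_n\}_{n=1}^\infty\) in \(C(X)\) with \(K_n\to K\) and \(a_n\in K_n\). If \(K=\{a\}\) I simply take \(K_n=\{a_n\}\), so assume \(K\) has more than one point. The key idea is to exhaust \(K\), up to arbitrarily small Hausdorff error, by finite unions of irreducible subcontinua emanating from \(a\). Concretely, for each \(m\in\mathbb N\) I would choose a finite \((1/m)\)-net \(\{q_1,\dots,q_{k_m}\}\) of \(K\) with each \(q_j\neq a\) (possible because a nondegenerate continuum has no isolated points), and, invoking the Brouwer reduction theorem inside \(C(K)\), pick for each \(j\) a subcontinuum \(I_j\subset K\) that is irreducible between \(a\) and \(q_j\). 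Then \(K'_m:=\bigcup_{j=1}^{k_m} I_j\) is a subcontinuum of \(X\) containing \(a\) and all the \(q_j\), whence \(d_H(K'_m,K)\le 1/m\).

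Now I would apply the hypothesis to each \(I_j\), which lies in \(C(X)\) and is irreducible between \(a\) and the other point \(q_j\), obtaining sequences \(\{A^{(j)}_n\}_n\) in \(C(X)\) with \(A^{(j)}_n\to I_j\) and \(a_n\in A^{(j)}_n\). Since all the \(A^{(j)}_n\) share the point \(a_n\), the finite union \(B^m_n:=\bigcup_{j=1}^{k_m}A^{(j)}_n\) is again a subcontinuum containing \(a_n\), and by continuity of finite unions in the Hausdorff metric \(B^m_n\to K'_m\) as \(n\to\infty\). Thus for each \(m\) there is an index \(N_m\), which I may take strictly increasing, with \(d_H(B^m_n,K)\le d_H(B^m_n,K'_m)+d_H(K'_m,K)\le 2/m\) for all \(n\ge N_m\). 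Defining \(K_n:=B^m_n\) when \(N_m\le n<N_{m+1}\), and \(K_n:=\{a_n\}\) for \(n<N_1\), yields a sequence in \(C(X)\) with \(a_n\in K_n\) and \(K_n\to K\), which is exactly the Kelley condition at \(a\) for \(K\).

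The only real obstacle is the reduction from arbitrary subcontinua to irreducible ones, handled by the finite-union approximation above; the remaining ingredients (existence of irreducible subcontinua via Brouwer reduction, continuity of finite unions, and the diagonal selection of \(\{N_m\}\)) are standard, and the degenerate case \(K=\{a\}\) together with the requirement \(q_j\neq a\) are the only minor technical points to watch.
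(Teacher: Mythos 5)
Your proof is correct. There is actually nothing in the paper to compare it against: the authors explicitly leave the proof of this theorem to the reader, so your argument serves as a complete filling of that gap. The necessity is, as you say, immediate, and your sufficiency argument is sound: approximate an arbitrary subcontinuum \(K\ni a\) within \(1/m\) by \(K'_m=\bigcup_{j=1}^{k_m} I_j\), where the \(I_j\subset K\) are irreducible between \(a\) and the points \(q_j\neq a\) of a finite net (existence of irreducible subcontinua is standard, and irreducibility is intrinsic to \(I_j\), so each \(I_j\) qualifies as an \(I\) in the hypothesis); then use that all the \(A^{(j)}_n\) contain the common point \(a_n\), so \(B^m_n=\bigcup_j A^{(j)}_n\) is a subcontinuum, and the estimate \(H\bigl(\bigcup_j A_j,\bigcup_j B_j\bigr)\le\max_j H(A_j,B_j)\) gives \(B^m_n\to K'_m\); finally diagonalize over \(m\). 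Each technical point you flag (the degenerate case \(K=\{a\}\), choosing \(q_j\neq a\) using that nondegenerate continua have no isolated points, strict monotonicity of the \(N_m\)) is handled. It is worth observing that your direct, constructive route is genuinely different from how the paper proves the semi-Kelley analogue (Theorem 2.2): there the sufficiency is argued by contradiction via maximal limit continua, essentially because the disjunction between the two endpoints \(a\) and \(b\) prevents one from coherently unioning approximating sequences attached to different irreducible subcontinua. For the Kelley property no such obstruction exists, and your union-and-diagonalize argument is the natural and cleaner choice.
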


We prove the following Theorem.
\begin{theorem}\label{equivalencia}
Let \(X\) be a continuum. Then, \(X\) is semi-Kelley if and only if for every \(a,b\in X\), for every \(I\in C(X)\) irreducible between \(a\) and \(b\), and for every sequence \(\{a_n\}_{n=1}^\infty\) in \(X\) converging to \(a\) there exists a sequence \(\{A_n\}_{n=1}^\infty\) in \(C(X)\) converging to \(I\) such that \(a_n\in A_n\) for each \(n\in\mathbb N\), or for every sequence \(\{b_n\}_{n=1}^\infty\) in \(X\) converging to \(b\) there exists a sequence \(\{B_n\}_{n=1}^\infty\) in \(C(X)\) converging to \(I\) such that \(b_n\in B_n\) for each \(n\in\mathbb N\).
\end{theorem}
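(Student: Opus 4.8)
The plan is to prove both implications by contraposition, in each case either manufacturing or exploiting a pair of incomparable maximal limit continua. Throughout I will use the standard facts that if \(\{C_n\}\to C\) and \(\{D_n\}\to D\) in \(C(X)\) then \(\{C_n\cup D_n\}\to C\cup D\), and that if \(p\in C=\lim C_n\) then there are \(p_n\in C_n\) with \(p_n\to p\).

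For the forward implication, suppose \(X\) is semi-Kelley but the stated property fails for some \(I\in C(X)\) irreducible between \(a\) and \(b\). Then there is a sequence \(\{a_n\}\to a\) admitting no sequence \(\{A_n\}\to I\) with \(a_n\in A_n\), and likewise a sequence \(\{b_n\}\to b\) admitting no such approximation of \(I\). First I would build from \(\{a_n\}\) a maximal limit continuum \(M_a\) of \(I\) containing \(a\): consider the family \(\mathcal L\) of all continua \(L\subset I\) that arise as \(L=\lim M_n\) for some convergent sequence \(\{M_n\}\) with \(a_n\in M_n\), order \(\mathcal L\) by inclusion, and apply Zorn's lemma, where an increasing chain is bounded by the closure of its union (realized as a limit of a diagonally chosen sequence through the \(a_n\)). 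A maximal element \(M_a\) of \(\mathcal L\) is then a maximal limit continuum of \(I\), its witnessing sequence being the one realizing it as a limit. Since the approximation of \(I\) fails along \(\{a_n\}\), this forces \(M_a\neq I\), so \(M_a\) is a proper subcontinuum of \(I\) containing \(a\); irreducibility of \(I\) between \(a\) and \(b\) then gives \(b\notin M_a\). Symmetrically \(\{b_n\}\) yields a maximal limit continuum \(M_b\subsetneq I\) with \(b\in M_b\) and \(a\notin M_b\). Hence \(M_a\) and \(M_b\) are incomparable maximal limit continua of \(K=I\), contradicting semi-Kelley.

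For the backward implication, suppose the property holds but \(X\) is not semi-Kelley, so some \(K\in C(X)\) carries two incomparable maximal limit continua \(L=\lim L_n\) and \(M=\lim M_n\). Choose \(\ell\in L\setminus M\) and \(m\in M\setminus L\) (so \(\ell\neq m\)), and let \(I\subset K\) be a subcontinuum irreducible between \(\ell\) and \(m\). Applying the hypothesis to \(I\) with \(a=\ell\), \(b=m\), the approximation holds either at \(\ell\) or at \(m\); say at \(\ell\). Pick \(\ell_n\in L_n\) with \(\ell_n\to\ell\), and use the hypothesis to obtain \(\{A_n\}\to I\) with \(\ell_n\in A_n\). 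Setting \(L_n'=L_n\cup A_n\) gives continua (they meet in \(\ell_n\)) with \(L_n\subset L_n'\) and \(L_n'\to L\cup I\subset K\). The maximality of \(L\) forces \(L\cup I=L\), i.e.\ \(I\subset L\); but \(m\in I\setminus L\), a contradiction. The case where the approximation holds at \(m\) is identical with the roles of \(L,M\) and \(\ell,m\) interchanged, yielding \(I\subset M\) against \(\ell\in I\setminus M\). Hence \(X\) is semi-Kelley.

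The main obstacle I anticipate is the existence step in the forward direction: producing, through a prescribed sequence \(\{a_n\}\), a maximal limit continuum of \(I\) whose witnessing sequence is exactly the one realizing the Zorn-maximal limit. Care is needed so that inclusion-maximality in \(\mathcal L\) translates precisely into the ``no larger limit under enlargement of the sequence'' clause of the definition of maximal limit continuum, and so that chains in \(\mathcal L\) are bounded (a diagonal argument using separability of \(C(I)\), together with the \(\lim\) of an increasing chain being the closure of its union). The remaining verifications—that \(M_a\neq I\), the incomparability of \(M_a,M_b\), and the union/limit manipulations in the backward direction—are then routine.
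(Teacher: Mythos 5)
Your proof is correct, and in the necessity direction it takes a genuinely different route from the paper. The paper proves necessity positively: it cites Proposition 3.9 and Statement 3.4 of Charatonik--Charatonik to produce maximal limit continua of \(I\) along \emph{subsequences} of the two given sequences, uses semi-Kelley plus irreducibility to force one of them to equal \(I\) (their Claim 1, a per-pair-of-sequences, subsequence statement), and then must work to make the choice of endpoint \emph{uniform} over all sequences: this is their Claim 2 (for every open \(\mathcal U\subset C(X)\) containing \(I\), \(\bigcup\mathcal U\) is a neighborhood of \(a\), or the same holds at \(b\)), followed by an explicit construction with the basic neighborhoods \(\mathcal U_m\) that converts the neighborhood dichotomy into a full-sequence approximation. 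Your contrapositive formulation sidesteps the uniformization problem entirely: negating the disjunction of the two universally quantified clauses hands you one bad sequence at \(a\) and one at \(b\), and your Zorn's lemma construction builds a maximal limit continuum of \(I\) through each \emph{full} bad sequence, so no subsequence-to-sequence or side-selection step is ever needed; properness follows from badness, incomparability from irreducibility, and semi-Kelley is contradicted directly. The price is that you must reprove (in full-sequence form) the existence result the paper imports from \cite{CC98}; your sketch of the chain bound -- extracting a countable increasing cofinal subchain via separability, noting that an increasing sequence of continua converges to the closure of its union, and diagonalizing the witnessing sequences while keeping \(a_n\in M_n\) -- is exactly the right argument, and the verification that Zorn-maximality in your family \(\mathcal L\) yields the paper's definition of maximal limit continuum goes through because any enlargement of the witnessing sequence with limit inside \(I\) again lies in \(\mathcal L\). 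Your sufficiency argument (points \(\ell,m\), irreducible \(I\), \(L_n'=L_n\cup A_n\to L\cup I\subset K\), contradicting maximality of \(L\)) is essentially identical to the paper's. In short: the paper's route yields the neighborhood characterization of Claim 2 as a byproduct and leans on the literature; yours is more self-contained and logically cleaner, at the cost of carrying out the Zorn/diagonal construction explicitly.
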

\begin{proof}
{\it Necessity}. Suppose that \(X\) is semi-Kelley. Let \(a,b\in X\), let \(I\in C(X)\) be irreducible between \(a\) and \(b\). For a sequence \(\{x_n\}_{n=1}^\infty\) in \(X\) converging to some \(x\in X\), let \(\mathcal M(\{x_n\}_{n=1}^\infty)\) be the family of all \(M\in C(X)\) with \(x\in M\) such that there exists a subsequence \(\{x_{n_k}\}_{k=1}^\infty\) of the sequence \(\{x_n\}_{n=1}^\infty\) and there is a sequence \(\{M_k\}_{k=1}^\infty\) in \(C(X)\) converging to \(M\) such that \(x_{n_k}\in M_k\) for each \(k\in\mathbb{N}\).

\textbf{Claim 1}. For any sequences \(\{x_n\}_{n=1}^\infty\) and \(\{y_n\}_{n=1}^\infty\) in \(X\) converging to \(a\) and \(b\), respectively, there exist a subsequence \(\{x_{n_k}\}_{k=1}^\infty\) of the sequence \(\{x_n\}_{n=1}^\infty\) and a sequence \(\{M_k\}_{k=1}^\infty\) in \(C(X)\) converging to \(I\) such that \(x_{n_k}\in M_k\) for each \(k\in\mathbb{N}\), or there exist a subsequence \(\{y_{n_k}\}_{k=1}^\infty\) of the sequence \(\{y_n\}_{n=1}^\infty\) and a sequence \(\{N_k\}_{k=1}^\infty\) in \(C(X)\) converging to \(I\) such that \(y_{n_k}\in N_k\) for each \(k\in\mathbb{N}\) (compare with \cite[Proposition 3.1]{CC98}).

\emph{Proof of Claim 1.}

By Proposition 3.9 and Statement 3.4 of \cite{CC98}, there exists \(A\in C(I)\cap \mathcal M(\{x_n\}_{n=1}^\infty)\) and there exists \(B\in C(I)\cap \mathcal M(\{y_n\}_{n=1}^\infty)\) such that \(A\) and \(B\) are maximal limit continua of \(I\). Since \(X\) is semi-Kelley, without loss of generality we may assume \(A\subset B\). Notice that \(a\in A\) and \(a,b\in B\subset I\), thus \(B=I\). Hence, there exist a subsequence \(\{y_{n_k}\}_{k=1}^\infty\) of the sequence \(\{y_n\}_{n=1}^\infty\) and a sequence \(\{N_k\}_{k=1}^\infty\) in \(C(X)\) converging to \(I\) such that \(y_{n_k}\in N_k\) for each \(k\in\mathbb{N}\). Claim 1 is proved.

\textbf{Claim 2}. For each open set \(\mathcal U\) of \(C(X)\) with \(I\in \mathcal U\), we have that \(\bigcup\mathcal U\) is a neighborhood of \(a\), or for each open set \(\mathcal V\) of \(C(X)\) with \(I\in \mathcal V\), we have that \(\bigcup\mathcal V\) is a neighborhood of \(b\).

\emph{Proof of Claim 2.}

Assume there exist open sets \(\mathcal U\) and \(\mathcal V\) of \(C(X)\), with \(I\in \mathcal U\cap \mathcal V\) and such that \(\bigcup \mathcal U\) is not a neighborhood of \(a\) and \(\bigcup \mathcal V\) is not a neighborhood of \(b\). Thus, \(\mathcal U\cap \mathcal V\) is an open set of \(C(X)\) and \(\bigcup(\mathcal U\cap \mathcal V)\) is not a neighborhood of \(a\) nor a neighborhood of \(b\). Choose sequences \(\{x_n\}_{n=1}^\infty\) and \(\{y_n\}_{n=1}^\infty\) in \(X\setminus\bigcup(\mathcal U\cap \mathcal V)\) converging to \(a\) and \(b\), respectively, and notice that for no subsequence \(\{x_{n_k}\}_{k=1}^\infty\) of the sequence \(\{x_n\}_{n=1}^\infty\) there exists a sequence \(\{M_k\}_{k=1}^\infty\) in \(C(X)\) converging to \(I\) with \(x_{n_k}\in M_k\) for each \(k\in\mathbb{N}\), and for no subsequence \(\{y_{n_k}\}_{k=1}^\infty\) of the sequence \(\{y_n\}_{n=1}^\infty\) there exists a sequence \(\{N_k\}_{k=1}^\infty\) in \(C(X)\) converging to \(I\) with \(y_{n_k}\in N_k\) for each \(k\in\mathbb{N}\), contradicting Claim 1. Claim 2 is proved.

Without loss of generality, assume that for each open set \(\mathcal U\) of \(C(X)\) with \(I\in \mathcal U\), we have that \(\bigcup\mathcal U\) is a neighborhood of \(a\). Now let \(\{a_n\}_{n=1}^\infty\) be a sequence in \(X\) converging to \(a\).

For each \(m\in\mathbb{N}\), let \(\mathcal U_m=\{A\in C(X):H(A,I)< \frac 1m\}\) and let \(\mathcal U_0=C(X)\). Given \(n\in\mathbb{N}\), choose \(A_n\in C(X)\) in the following way: if \(a_n\in I\), let \(A_n=I\); if \(a_n\notin I\), choose \(m\in\mathbb{N}\) such that \(a_n\in \bigcup \mathcal U_m\setminus\bigcup \mathcal U_{m+1}\) and let \(A_n\in\mathcal U_m\) be such that \(a_n\in A_n\). Notice that the sequence \(\{A_n\}_{n=1}^\infty\) converges to \(I\). This completes the proof of the necessity.

{\it Sufficiency}. Suppose that for every \(a,b\in X\), for every \(I\in C(X)\) irreducible between \(a\) and \(b\), and for every sequence \(\{a_n\}_{n=1}^\infty\) in \(X\) converging to \(a\) there exists a sequence \(\{A_n\}_{n=1}^\infty\) in \(C(X)\) converging to \(I\) such that \(a_n\in A_n\) for each \(n\in\mathbb N\), or for every sequence \(\{b_n\}_{n=1}^\infty\) in \(X\) converging to \(b\) there exists a sequence \(\{B_n\}_{n=1}^\infty\) in \(C(X)\) converging to \(I\) such that \(b_n\in B_n\) for each \(n\in\mathbb N\). Assume that \(X\) is not semi-Kelley, thus there exists \(K\in C(X)\) and there exist maximal limit continua \(L,M\) of \(K\), such that \(L\setminus M\neq\emptyset\neq M\setminus L\). Since \(L\) is a maximal limit continuum of \(K\), there exists a sequence \(\{L_n\}_{n=1}^\infty\) in \(C(X)\) converging to \(L\) such that if \(\{L_n'\}_{n=1}^\infty\) is a sequence in \(C(X)\) converging to some \(L'\in C(K)\) and \(L_n\subset L_n'\) for each \(n\in\mathbb N\), then \(L'=L\). Analogously for \(M\), there exists \(\{M_n'\}_{n=1}^\infty\) its corresponding sequence. Take points \(a\in L\setminus M\), \(b\in M\setminus L\), and sequences \(\{a_n\}_{n=1}^\infty\) and \(\{b_n\}_{n=1}^\infty\) converging to \(a\) and \(b\), respectively, such that \(a_n\in L_n\), \(b_n\in M_n\) for each \(n\in\mathbb N\), and let \(I\in C(K)\) be irreducible between \(a\) and \(b\). Without loss of generality, there exists a sequence \(\{A_n\}_{n=1}^\infty\) in \(C(X)\) converging to \(I\) such that \(a_n\in A_n\) for each \(n\in\mathbb N\). Define the subcontinuum \(L_n'=L_n\cup A_n\) for each \(n\in\mathbb N\), thus \(L_n\subset L_n'\), and the sequence \(\{L_n'\}_{n=1}^\infty\) converges to \(L\cup I\neq L\) and \(L\cup I\in C(K)\). This contradicts to the fact that \(L\) is a maximal limit continuum of \(K\). Thus \(X\) is semi-Kelley. The proof is complete.
\end{proof}


\section{Subcontinua having the property of semi-Kelley}
R. Wardle proved that if \(X\) is a Kelley continuum and \(Y\) is a retract of \(X\), then \(Y\) is Kelley, see \cite[Theorem 2.9]{W77}. We prove a similar statement for neighborhood retracts and semi-Kelley continua.

\begin{theorem}
	Let \(X\) be a (semi-)Kelley continuum and let \(Y\in C(X)\) be a neighborhood retract of \(X\). Then, \(Y\) is (semi-)Kelley.
\end{theorem}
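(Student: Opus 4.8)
The plan is to handle both cases by one common mechanism: any sequence of subcontinua of \(X\) that witnesses the (semi-)Kelley property and converges into \(Y\) can be pushed into \(C(Y)\) by the retraction. So first I would fix an open set \(U\subset X\) with \(Y\subset U\) and a retraction \(r:U\to Y\), and record two facts. First, \(r\) fixes every point of \(Y\), and the induced map \(C(r):C(U)\to C(Y)\), \(A\mapsto r(A)\), is continuous (choosing a compact neighborhood of the relevant limit continuum inside \(U\), on which \(r\) is uniformly continuous). Second, if a sequence \(\{K_n\}_{n=1}^\infty\) in \(C(X)\) converges to a continuum \(K\subset Y\), then, since \(Y\subset U\) and \(U\) is open, \(K_n\subset U\) for all sufficiently large \(n\); for those \(n\) the continuum \(r(K_n)\) is defined and \(r(K_n)\to r(K)=K\).

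For the \emph{Kelley} case I would show \(Y\) is Kelley at an arbitrary \(p\in Y\). Fix \(K\in C(Y)\) with \(p\in K\) and a sequence \(\{p_n\}_{n=1}^\infty\) in \(Y\) converging to \(p\). Regarding \(K\) and \(\{p_n\}\) inside \(X\) and using that \(X\) is Kelley, I get \(\{K_n\}_{n=1}^\infty\) in \(C(X)\) with \(K_n\to K\) and \(p_n\in K_n\). By the observation above there is \(N\) with \(K_n\subset U\) for \(n\ge N\); for such \(n\) put \(K_n'=r(K_n)\in C(Y)\), so \(p_n=r(p_n)\in K_n'\) and \(K_n'\to K\). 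For the finitely many \(n<N\) put \(K_n'=\{p_n\}\in C(Y)\). Then \(\{K_n'\}_{n=1}^\infty\subset C(Y)\) satisfies \(K_n'\to K\) and \(p_n\in K_n'\), so \(Y\) is Kelley at \(p\).

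For the \emph{semi-Kelley} case I would invoke the equivalence of Theorem \ref{equivalencia}. Take \(a,b\in Y\) and \(I\in C(Y)\) irreducible between \(a\) and \(b\); irreducibility is intrinsic to \(I\), so \(I\) is also irreducible between \(a\) and \(b\) as an element of \(C(X)\). Since \(X\) is semi-Kelley, Theorem \ref{equivalencia} yields, for this \(a,b,I\), one of its two alternatives in \(X\); without loss of generality the one for \(a\). I then transfer it: given any sequence \(\{a_n\}_{n=1}^\infty\) in \(Y\) converging to \(a\), view it as a sequence in \(X\), obtain \(\{A_n\}_{n=1}^\infty\) in \(C(X)\) with \(A_n\to I\) and \(a_n\in A_n\), and push forward by \(r\) exactly as in the Kelley case (using \(I\subset Y\subset U\)) to produce \(\{A_n'\}_{n=1}^\infty\) in \(C(Y)\) with \(A_n'\to I\) and \(a_n\in A_n'\). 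Hence the \(a\)-alternative of Theorem \ref{equivalencia} holds in \(Y\) for this \(a,b,I\). As \(a,b,I\) were arbitrary, the condition of Theorem \ref{equivalencia} holds for \(Y\), and a second application of that theorem gives that \(Y\) is semi-Kelley.

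The only genuinely delicate point is the bookkeeping at the finitely many initial indices where the approximating continua coming from \(X\) need not lie in \(U\); this is harmless, since Hausdorff convergence ignores finite initial segments and singletons belong to \(C(Y)\), so replacing those terms by \(\{p_n\}\) (respectively \(\{a_n\}\)) preserves both the required membership and the limit. The real content is carried entirely by the continuity of \(C(r)\) together with the fact that \(r\) restricts to the identity on \(Y\); everything else is a transcription of the definitions and of Theorem \ref{equivalencia}.
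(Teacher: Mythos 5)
Your proposal is correct and follows essentially the same route as the paper: apply Theorem \ref{equivalencia} to \(a,b\in Y\) and \(I\in C(Y)\), obtain the approximating sequence in \(C(X)\), push it into \(C(Y)\) with the retraction once the terms lie in \(U\), and patch the finitely many initial terms with singletons \(\{a_n\}\). The only (harmless) difference is that for the Kelley case you argue directly from the definition, where the paper simply says the argument is similar; your extra care in justifying that \(r(K_n)\to r(K)=K\) via uniform continuity on a compact neighborhood is a detail the paper leaves implicit.
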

\begin{proof}
Suppose that \(X\) is semi-Kelley, the proof when \(X\) is Kelley is similar. Let \(U\) be an open set of \(X\) containing \(Y\) such that there exists a retraction \(\rho:U\to Y\). Let \(a,b\in Y\), let \(I\in C(Y)\) be irreducible between \(a\) and \(b\), and let \(\{a_n\}_{n=1}^\infty\), \(\{b_n\}_{n=1}^\infty\) be sequences in \(Y\) converging to \(a\) and \(b\), respectively. Since \(X\) is semi-Kelley, by Theorem \ref{equivalencia}, without loss of generality assume there exists a sequence \(\{A_n'\}_{n=1}^\infty\) in \(C(X)\) converging to \(I\) such that \(a_n\in A_n'\) for each \(n\in\mathbb N\). Since \(I\subset Y\subset U\), there exists \(N\in\mathbb N\) such that \(A_n'\subset U\) for \(n\geq N\). For each \(n<N\), define \(A_n=\{a_n\}\), and for each \(n\geq N\), define \(A_n=\rho(A_n')\), thus the sequence \(\{A_n\}_{n=1}^\infty\) is contained in \(C(Y)\), converges to \(I\), and \(a_n\in A_n\) for each \(n\in\mathbb N\). By Theorem \ref{equivalencia}, \(Y\) is semi-Kelley.
\end{proof}

\begin{corollary}
	Let \(X\) be a (semi-)Kelley continuum and let \(Y\in C(X)\) be a retract of \(X\). Then, \(Y\) is (semi-)Kelley.
\end{corollary}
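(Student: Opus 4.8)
The plan is to recognize this corollary as an immediate specialization of the preceding Theorem, so that no substantive new argument is required. First I would observe that a retraction \(r\colon X\to Y\) is, in particular, a retraction of an open neighborhood of \(Y\) onto \(Y\): indeed, \(X\) is itself an open subset of \(X\) and it contains \(Y\). Hence every retract \(Y\) of \(X\) is automatically a neighborhood retract of \(X\), witnessed by the open set \(U=X\) together with the given retraction.

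With this observation in hand, I would simply invoke the preceding Theorem. Since \(X\) is assumed (semi-)Kelley and \(Y\in C(X)\) is a neighborhood retract of \(X\), that Theorem yields at once that \(Y\) is (semi-)Kelley, and the proof is finished. Concretely, the Theorem requests an open set \(U\) of \(X\) with \(Y\subset U\) and a retraction \(U\to Y\); the corollary furnishes exactly this data upon setting \(U=X\) and using the hypothesised retraction \(X\to Y\).

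The only point meriting a moment's attention is verifying that the hypotheses of the Theorem are genuinely satisfied, namely that \(X\) qualifies as an admissible open neighborhood and that \(Y\) remains a subcontinuum; both are given, since \(X\) is trivially open in itself and \(Y\in C(X)\) by assumption. I therefore do not anticipate any real obstacle: the passage from \emph{retract} to \emph{neighborhood retract} is purely formal, and the corollary is a direct consequence of the Theorem rather than an independent result.
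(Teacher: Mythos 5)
Your proof is correct and is exactly the argument the paper intends: the corollary is stated without proof precisely because a retract is a neighborhood retract (take \(U=X\), which is open in itself), so the preceding Theorem applies immediately.
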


J. Prajs proved that if \(X\) is Kelley and \(Y\in C(X)\) is semi-terminal, then \(Y\) is Kelley, see \cite[Proposition 4.2]{P11}. Here we state an analogous statement for semi-Kelley continua and, using Theorem \ref{equivalencia}, the proof is analogous to the proof of Prajs.

\begin{theorem}
Let \(X\) be a semi-Kelley continuum and let \(Y\in C(X)\) be semi-terminal. Then, \(Y\) is semi-Kelley.
\end{theorem}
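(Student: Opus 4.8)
The plan is to verify the characterization of Theorem~\ref{equivalencia} for $Y$. So I would fix $a,b\in Y$ and $I\in C(Y)$ irreducible between $a$ and $b$, and aim to establish, for these data, the stated disjunction for $Y$. The first observation is that the subcontinua of $I$ are the same whether computed in $Y$ or in $X$, so $I$ is irreducible between $a$ and $b$ in $X$ as well; hence Theorem~\ref{equivalencia} applies to the semi-Kelley continuum $X$ and yields the corresponding disjunction in $X$. Without loss of generality I would work with its $a$-side: for every sequence $\{a_n\}_{n=1}^\infty$ converging to $a$ there is a sequence $\{A_n'\}_{n=1}^\infty$ in $C(X)$ converging to $I$ with $a_n\in A_n'$. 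The task then becomes to upgrade the ambient continua $A_n'$ to continua lying in $Y$ that still converge to $I$.

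Given a sequence $\{a_n\}_{n=1}^\infty$ in $Y$ converging to $a$, I would take the $A_n'$ as above (note $a_n\in Y$, so $a_n\in A_n'\cap Y$) and let $A_n$ be the component of $A_n'\cap Y$ containing $a_n$. Each $A_n$ is a subcontinuum of $Y$ containing $a_n$, so $\{A_n\}_{n=1}^\infty\subset C(Y)$. Since $A_n\subset A_n'$ and $A_n'\to I$, the upper limit of $\{A_n\}$ is contained in $I$, and any convergent subsequence has its limit inside $I$ and containing $a=\lim a_n$; the irreducibility of $I$ between $a$ and $b$ then forces such a limit to be either all of $I$ or a proper subcontinuum missing $b$. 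Thus showing $A_n\to I$ reduces to excluding the possibility that, along a subsequence, $A_n\to A\subsetneq I$ with $b\notin A$; if this exclusion succeeds, the $a$-side of Theorem~\ref{equivalencia} holds for $Y$ and $Y$ is semi-Kelley.

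The exclusion is the heart of the matter, and it is the step I expect to be the main obstacle; it is where the semi-terminality of $Y$ must enter, in the spirit of Prajs's argument in \cite[Proposition 4.2]{P11}. If $A_n\to A\subsetneq I$ with $b\notin A$, then the $A_n'$ still approach $b$ while their $Y$-parts $A_n$ stay away from $b$, so $A_n'$ is forced to leave $Y$ on its way toward $b$; a boundary-bumping analysis of $A_n'\cap Y$ inside the continuum $A_n'$ should then produce a subcontinuum that meets $Y$ and is not contained in $Y$. The idea is that a simultaneous shortfall on the $b$-side would symmetrically produce such a straddling subcontinuum near $a$, and for large $n$ the two would be disjoint, contradicting the defining property of a semi-terminal subcontinuum; hence at least one side of the disjunction survives intersection with $Y$. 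The genuinely delicate points, which I expect to consume most of the work, are that $A_n'\cap Y$ need not be connected and that $X$ need not be locally connected, so the required straddling subcontinua must be extracted by boundary bumping rather than by naive intersection, and one must track carefully which side of the Theorem~\ref{equivalencia} disjunction for $X$ is available so that the transfer to $Y$ lands on the correct side. This is exactly the continuum-theoretic core that runs parallel to the Kelley case treated by Prajs.
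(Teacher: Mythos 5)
Your frame coincides with the paper's first step: apply Theorem \ref{equivalencia} to $X$, keep the side that holds there (say the $a$-side, giving $A_n'\to I$ in $C(X)$ with $a_n\in A_n'$), and try to replace the $A_n'$ by subcontinua of $Y$. But the step you defer is the whole theorem, and the sketch you give for it would fail. You aim to show that the components $A_n$ of $A_n'\cap Y$ through $a_n$ cannot converge, along a subsequence, to some $A\subsetneq I$, by producing two disjoint straddling continua (continua meeting $Y$ but not contained in $Y$) and contradicting semi-terminality. Two concrete objections. First, after your without-loss-of-generality reduction you have only $a$-side data in $X$; if the $b$-side of Theorem \ref{equivalencia} simply fails for $X$, there is no family $\{B_n'\}$ at all, so the ``simultaneous shortfall'' you plan to refute is not the only bad case, and a single straddling continuum contradicts nothing. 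Second, disjointness is unobtainable: semi-terminality of $Y$ says precisely that straddling continua pairwise intersect, and the straddling continua your construction can produce all lie in arbitrarily small neighborhoods of $I$, so there is no reason for any two of them to be disjoint. Worse, if one invokes \cite[Proposition 3.1]{P11} (which you would need anyway) to see that $A_n'\cap Y$ is connected, then under your shortfall hypothesis the whole set $A_n'\cap Y$ clusters onto $A$, so for large $n$ every point of $A_n'$ near $b$ lies outside $Y$; hence no subcontinuum of $A_n'$ localized near $b$ even meets $Y$, and such a piece is useless against semi-terminality, which only constrains continua that do meet $Y$.

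The idea you are missing --- and it is exactly how the paper, following Prajs, argues --- is that one should not try to prove that the $Y$-parts converge to $I$ at all: that is stronger than needed, and nothing forces it. Instead, enlarge them by the target itself. Put $L_n=A_n'\cap Y$ and $K_n=L_n\cup I$. Since $I\subset Y$ we have $K_n\subset Y$; since $I\subset K_n$ and $\limsup L_n\subset\lim A_n'=I$, the convergence $K_n\to I$ is automatic and $a_n\in K_n$, with no exclusion argument required. The only issue is connectedness of $K_n$, and that is the one place semi-terminality enters: for the indices with $A_n'\not\subset Y$ (for the others take $K_n=A_n'$), \cite[Proposition 3.1]{P11} gives that $L_n$ is a continuum, and the arguments of \cite[Proposition 4.2]{P11} give that $K_n=L_n\cup I$ is a subcontinuum of $Y$. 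Theorem \ref{equivalencia} applied to $Y$ then finishes the proof. So the gap in your proposal is not a deferred technicality: the union-with-$I$ device replaces precisely the step you flagged as the main obstacle, and without it (or a correct substitute) the argument does not close.
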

\begin{proof}
Let \(a,b\in Y\), let \(I\in C(Y)\) be irreducible between \(a\) and \(b\), and let \(\{a_n\}_{n=1}^\infty\), \(\{b_n\}_{n=1}^\infty\) be sequences in \(Y\) converging to \(a\) and \(b\), respectively. By Theorem \ref{equivalencia}, without loss of generality assume that there exists a sequence \(\{A_n\}_{n=1}^\infty\) in \(C(X)\) converging to \(I\) such that \(a_n\in A_n\) for each \(n\in\mathbb N\). If \(A_n\subset Y\) for each \(n\in\mathbb N\), Theorem \ref{equivalencia} finishes the proof that \(Y\) is semi-Kelley. Thus we may assume that \(A_n\setminus Y\neq\emptyset\) for each \(n\in\mathbb N\). By \cite[Proposition 3.1]{P11}, the sets \(L_n=A_n\cap Y\) are continua, and by the same arguments as in the proof of \cite[Proposition 4.2]{P11}, the sets \(K_n=L_n\cup I\) are subcontinua of \(Y\) converging to \(I\) and \(a_n\in K_n\) for each \(n\in\mathbb N\). By Theorem \ref{equivalencia}, \(Y\) is semi-Kelley.
\end{proof}

\begin{theorem}\label{infinitoodo}
Let \(X\) be a semi-Kelley continuum but not hereditarily semi-Kelley. Then, \(X\) contains an \(\infty\)-od.
\end{theorem}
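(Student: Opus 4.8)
The plan is to play the failure of the semi-Kelley property on a subcontinuum against its validity on all of $X$, and to extract the legs of an $\infty$-od from the gap between the two. Since $X$ is not hereditarily semi-Kelley, I would fix a subcontinuum $Y\in C(X)$ that is not semi-Kelley (necessarily $Y\subsetneq X$) and, by definition, produce $K\in C(Y)$ together with two maximal limit continua $L,M$ of $K$, computed in $Y$, with $L\setminus M\neq\emptyset\neq M\setminus L$. Choosing $a\in L\setminus M$ and $b\in M\setminus L$, I would fix the defining sequences $\{L_n\}\to L$ and $\{M_n\}\to M$ that witness maximality, sequences $a_n\to a$ and $b_n\to b$ with $a_n\in L_n$ and $b_n\in M_n$, and a subcontinuum $I\subset K$ that is irreducible between $a$ and $b$ (irreducibility is intrinsic, so $I\in C(X)$ is irreducible between $a$ and $b$ in $X$ as well). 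Applying Theorem~\ref{equivalencia} to the semi-Kelley continuum $X$ with the pair $a,b$ and this $I$, and using the symmetry between $(a,L,L_n,a_n)$ and $(b,M,M_n,b_n)$, I may assume there is a sequence $\{A_n\}$ in $C(X)$ converging to $I$ with $a_n\in A_n$ for every $n$.

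The second step is to show these continua must leave $Y$ infinitely often. If $A_n\subset Y$ for all but finitely many $n$, then the sets $L_n\cup A_n$ are continua (since $a_n\in L_n\cap A_n$) lying in $C(Y)$, they contain $L_n$, and they converge to $L\cup I$; but $a\in L\cap I$ and $b\in I\setminus L$ give $L\cup I\in C(K)$ with $L\cup I\supsetneq L$, contradicting the maximality of $L$ in $Y$. Hence $A_n\not\subset Y$ for infinitely many $n$, while always $a_n\in A_n\cap Y$, so each such $A_n$ meets both $Y$ and the open set $X\setminus Y$. The boundary bumping theorem then provides, inside each such $A_n$, a subcontinuum $G_n\subset cl(X\setminus Y)$ with $G_n\not\subset Y$ and $G_n\cap bd(Y)\neq\emptyset$; since $G_n\subset A_n$ and $A_n\to I$, these legs accumulate on $I\cap bd(Y)$.

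It remains to assemble the legs $G_n$ over a common core, and this is where I expect the real difficulty to lie. If $X\setminus Y$ has infinitely many components, then $Y$ is already the core of an $\infty$-od (take $T=X$) and we are done, so the substantial case is when $X\setminus Y$ has only finitely many components; then infinitely many legs fall into a single complementary component $V$, and the task is to convert this into infinitely many \emph{distinct} complementary components of some subcontinuum $T\supset Y$. Two features make this delicate. First, the simple triod shows that escaping continua alone never force an $\infty$-od, so the argument must genuinely use the maximality of $L$ (and, I expect, of $M$) to prevent the legs from being consolidated; the natural device is to track the component $P_n$ of $(L_n\cup A_n)\cap Y$ containing $a_n$ and to argue, again from maximality, that the part of $L\cup I$ not captured by $\lim P_n$ is reachable only through the exterior legs. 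Second, the maximality hypothesis only constrains the fixed defining sequence, so one cannot naively rule out $L\cup I$ as a subsequential limit of $\{P_n\}$, and the attaching points $G_n\cap bd(Y)$ converge into $L\cup I$ rather than onto it; consequently the bookkeeping of which leg lands in which complementary component, and the precise choice of the core $B\subset T$, is the step that will require the most care with the boundary bumping theorem and with the components of the sets $A_n\cap Y$.
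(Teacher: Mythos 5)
Your preparation (the choice of $Y$, $K$, $L$, $M$, the points $a,b$, the irreducible continuum $I$, the application of Theorem \ref{equivalencia} to $X$, and the observation that $A_n\not\subset Y$ infinitely often) is sound, but the proposal stops exactly where the theorem has to be proved: no $\infty$-od is ever constructed, and you concede that you do not know how to keep the escaping pieces from consolidating into finitely many complementary components. Moreover, the obstruction you diagnose is not the real one. You worry that maximality of $L$ cannot exclude $L\cup I$ as a \emph{subsequential} limit of the components $P_n$ of $(L_n\cup A_n)\cap Y$ containing $a_n$; that is true but also not needed. What the construction requires is only that the \emph{full} sequence $\{P_n\}_{n=1}^\infty$ fail to converge to $L\cup I$, because non-convergence in the Hausdorff metric already yields $\varepsilon>0$ and a subsequence with $H(P_{n_k},L\cup I)\geq\varepsilon$, and this uniform separation, played against $L_{n_k}\cup A_{n_k}\to L\cup I$, is what boundary bumping needs in order to produce legs $D_k$ with $P_{n_k}\subsetneq D_k\subset L_{n_k}\cup A_{n_k}$ and $D_k\not\subset Y$. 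Full convergence $P_n\to L\cup I$ \emph{is} excluded by maximality: $L_n\subset P_n\in C(Y)$ for every $n$, and $L\cup I\subset K$ with $L\cup I\neq L$ (as $b\in I\setminus L$), so $\{P_n\}$ would be a convergent sequence of subcontinua of $Y$ violating the defining property of $L$. You stopped one observation short of making your own route work.

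The paper sidesteps this delicacy by encoding the failure differently, and this is the idea your proposal is missing. Instead of using the raw definition on $Y$, it applies Theorem \ref{equivalencia} to the non-semi-Kelley subcontinuum $A$ itself: this produces $a,b\in A$, $K\in C(A)$ irreducible between them, and sequences $a_n\to a$, $b_n\to b$ in $A$ such that \emph{no} sequence in $C(A)$ passing through the $a_n$'s converges to $K$ (and likewise for the $b_n$'s) --- a statement about all sequences, not just ones dominating a fixed defining sequence. After getting $K_n\to K$ in $C(X)$ with $a_n\in K_n$ from Theorem \ref{equivalencia} applied to $X$, this universal failure instantly gives that the components $C_n$ of $A\cap K_n$ containing $a_n$ do not converge to $K$, hence a subsequence stays uniformly away from $K$. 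That is precisely the input to the construction in the proof of \cite[Theorem 5.1]{AI00}, which the paper then invokes to obtain continua $D_j$ with $C_{n_j}\subsetneq D_j\subset K_{n_j}$ making $A\cup(\bigcup_{j=1}^\infty D_j)$ an $\infty$-od; the ``bookkeeping'' you flagged as the hard part is exactly what that cited argument carries out. So the gap is real and sits at the decisive step, and its source is that you extracted too weak a consequence from the failure of semi-Kelley on $Y$: either the full-sequence maximality argument above, or (cleaner) the paper's application of Theorem \ref{equivalencia} to the bad subcontinuum, is needed to supply the uniform separation that drives the Acosta--Illanes construction.
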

\begin{proof}
Let \(A\) be a subcontinuum of \(X\) without the property of semi-Kelley. By Theorem \ref{equivalencia}, there exist \(a,b\in A\), there exists \(K\in C(A)\) irreducible between \(a\) and \(b\), there exist sequences \(\{a_n\}_{n=1}^\infty\) and \(\{b_n\}_{n=1}^\infty\) in \(A\) converging to \(a\) and \(b\), respectively, such that there is no sequence \(\{L_n\}_{n=1}^\infty\) in \(C(A)\) converging to \(K\) with \(a_n\in L_n\) for each \(n\in\mathbb N\), and there is no sequence \(\{M_n\}_{n=1}^\infty\) in \(C(A)\) converging to \(K\) with \(b_n\in M_n\) for each \(n\in\mathbb N\). By Theorem \ref{equivalencia}, without loss of generality, assume there exists a sequence \(\{K_n\}_{n=1}^\infty\) in \(C(X)\) converging to \(K\) such that \(a_n\in K_n\) for each \(n\in\mathbb N\). Let \(C_n\) be the component of \(A\cap K_n\) containing \(a_n\) for each \(n\in\mathbb N\). Following the proof of \cite[Theorem 5.1]{AI00}, we can find an increasing sequence \(\{n_j\}_{j=1}^\infty\) in \(\mathbb N\) and a sequence \(\{D_j\}_{j=1}^\infty\) in \(C(X)\) with \(C_{n_j}\subsetneq D_j\subset K_{n_j}\) for each \(j\in\mathbb N\) and such that the set \(D=A\cup(\bigcup_{j=1}^\infty D_j)\) is an \(\infty\)-od. The proof is complete.
\end{proof}

G. Acosta and A. Illanes proved that atriodic Kelley continua are hereditarily Kelley, see \cite[Corollary 5.2]{AI00}. As a corollary of Theorem \ref{infinitoodo}, we obtain an analogous statement for semi-Kelley continua.
\begin{corollary}
Let \(X\) be an atriodic semi-Kelley continuum. Then, \(X\) is hereditarily semi-Kelley.
\end{corollary}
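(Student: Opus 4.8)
The plan is to establish the corollary as the contrapositive of Theorem \ref{infinitoodo}, so essentially no new work is required beyond unwinding the definitions from Section 3. I would begin by assuming that \(X\) is an atriodic semi-Kelley continuum, and suppose toward a contradiction that \(X\) is \emph{not} hereditarily semi-Kelley.

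Under this assumption \(X\) is a semi-Kelley continuum that fails to be hereditarily semi-Kelley, so Theorem \ref{infinitoodo} applies directly and produces a subcontinuum \(D\) of \(X\) that is an \(\infty\)-od. By the definition of \(\infty\)-od, there is a core \(B\in C(D)\) such that \(D\setminus B\) has infinitely many components.

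The one observation that carries the argument is that this same \(D\), with the very same core \(B\), is a \(3\)-od: having infinitely many components of \(D\setminus B\) entails, a fortiori, having at least three of them. Since \(D\) is a subcontinuum of \(X\), this shows that \(X\) contains a \(3\)-od, contradicting the hypothesis that \(X\) is atriodic. Hence no subcontinuum of \(X\) can fail to be semi-Kelley, and \(X\) is hereditarily semi-Kelley.

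I do not expect any genuine obstacle here, since all of the substantive content is absorbed into Theorem \ref{infinitoodo}; the corollary is a formal consequence of the monotonicity of the notion of \(n\)-od in the parameter \(n\), together with the definitions of \(n\)-od and atriodic recalled in Section 3. The only step to phrase with care is the passage from ``\(\infty\)-od'' to ``\(3\)-od,'' which should be stated explicitly so that it matches those definitions precisely.
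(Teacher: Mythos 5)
Your proof is correct and is exactly the argument the paper intends: the corollary is stated as an immediate consequence of Theorem \ref{infinitoodo}, obtained by noting that an \(\infty\)-od is in particular a \(3\)-od (since the paper's definition of \(n\)-od requires \emph{at least} \(n\) components of the complement of the core), so an atriodic continuum contains no \(\infty\)-od. Nothing is missing; your explicit handling of the passage from \(\infty\)-od to \(3\)-od is the only step the paper leaves tacit.
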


\section{Compactifications of \((0,1]\) having the property of semi-Kelley}

In this section we prove that semi-Kelley remainders are atriodic. Moreover we prove that a semi-Kelley continuum is a semi-Kelley remainder for chainable continua, circularly chainable continua, and arc continua. We give an example of an atriodic Kelley continuum which is a semi-Kelley remainder and not a Kelley remainder.

We will use a special case of Theorem \ref{equivalencia}.
\begin{lemma}\label{equivalenciaCompactacion}
Let \(X\) be a continuum and let \(Y=X\cup(0,1]\) be a compactification of \((0,1]\) with remainder \(X\). Then, \(Y\) is semi-Kelley if and only if for each \(a,b\in X\), for each \(I\in C(X)\) irreducible proper subcontinuum between \(a\) and \(b\), and for each sequence \(\{a_n\}_{n=1}^\infty\) in \(Y\) converging to \(a\) there exists a sequence \(\{A_n\}_{n=1}^\infty\) in \(C(Y)\) converging to \(I\) such that \(a_n\in A_n\subset X\) or \(a_n\in A_n\subset(0,1]\) for each \(n\in\mathbb N\), or for each sequence \(\{b_n\}_{n=1}^\infty\) in \(Y\) converging to \(b\) there exists a sequence \(\{B_n\}_{n=1}^\infty\) in \(C(Y)\) converging to \(I\) such that \(b_n\in B_n\subset X\) or \(b_n\in B_n\subset(0,1]\) for each \(n\in\mathbb N\).
\end{lemma}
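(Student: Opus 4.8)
The plan is to derive this lemma as a specialization of Theorem~\ref{equivalencia} applied to the compactification $Y=X\cup(0,1]$, the key point being that every proper subcontinuum $I$ of the remainder $X$ is irreducible in $Y$ exactly when it is irreducible in $X$, and that the extra bookkeeping ($a_n\in A_n\subset X$ \emph{or} $a_n\in A_n\subset(0,1]$) comes for free from the topology of the ray. First I would observe that since $X$ is a nowhere dense closed subset of $Y$ and $(0,1]$ is a dense connected half-open arc, the subcontinua of $Y$ split into three types: those contained in $X$, those contained in $(0,1]$ (which are closed subarcs $[s,t]\subset(0,1]$), and those meeting both $X$ and $(0,1]$ (which must contain an endpoint-free tail $(0,\varepsilon]$ together with part of $X$). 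This trichotomy is what makes the ``$\subset X$ or $\subset(0,1]$'' dichotomy tractable.

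\textbf{Main steps.}
For \emph{necessity}, I would assume $Y$ is semi-Kelley and invoke Theorem~\ref{equivalencia} directly with the same $a,b\in X$ and the same irreducible $I\in C(X)\subset C(Y)$: it yields, say, a sequence $\{A_n\}$ in $C(Y)$ converging to $I$ with $a_n\in A_n$. The work is to upgrade each $A_n$ to one satisfying the containment condition. Since $A_n\to I\subset X$ and $X$ is nowhere dense, for large $n$ the continuum $A_n$ is small and lies near $X$; if $A_n\subset X$ we keep it, and if $a_n\in(0,1]$ we may replace $A_n$ by the component of $A_n\cap(0,1]$, i.e.\ a short subarc of the ray containing $a_n$, using that a sufficiently small neighborhood of a point of $X$ in $Y$ meets $(0,1]$ in arcs that still converge to $I$ (here one uses that $I$, being a \emph{proper} subcontinuum of the remainder, can be approximated both from within $X$ and along the ray). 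For \emph{sufficiency}, the hypothesis already produces sequences of the required special form, which in particular are sequences in $C(Y)$ converging to $I$ with $a_n\in A_n$; feeding these into the sufficiency direction of Theorem~\ref{equivalencia} shows $Y$ is semi-Kelley, after checking that it suffices to verify the Theorem~\ref{equivalencia} condition only for $a,b\in X$ and $I$ a proper subcontinuum of $X$ (irreducible continua with an endpoint in $(0,1]$ or equal to all of $X$ are handled separately by the Kelley behavior of the ray).

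\textbf{Main obstacle.}
The hard part will be the necessity direction, specifically justifying the replacement of a general approximating continuum $A_n\in C(Y)$ (which a~priori may meet both $X$ and $(0,1]$) by one that is entirely inside $X$ or entirely inside $(0,1]$ while preserving both convergence to $I$ and the condition $a_n\in A_n$. The delicate case is when $a_n\in(0,1]$: one must extract a subarc of the ray through $a_n$ that still Hausdorff-converges to $I\subset X$, and this requires controlling how the ray accumulates onto $I$; the ``irreducible proper subcontinuum'' hypothesis is what guarantees $I\neq X$ so that such an approximation along the ray is geometrically possible. I would also need to treat the boundary case $a\in X$ with $a_n\in X$ eventually, which is immediate, separately from the mixed case. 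Once this surgery on the $A_n$ is established, both directions close cleanly against Theorem~\ref{equivalencia}.
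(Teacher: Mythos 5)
Your overall skeleton does match the paper's proof: both directions are routed through Theorem~\ref{equivalencia}, with necessity obtained by upgrading the sequence that theorem provides, and sufficiency by a case analysis according to whether \(a\) or \(b\) lies in the ray (where \(Y\) is Kelley) or both lie in \(X\). The gap is in the necessity direction, exactly at the step you yourself flag as the main obstacle, and your proposed repair cannot work. The missing idea is that \(X\) is a \emph{terminal} subcontinuum of \(Y\): if a subcontinuum \(K\) of \(Y\) meets both \(X\) and \((0,1]\), then each component of \(K\cap(0,1]\) is an interval of the ray whose closure meets \(X\) (boundary bumping), hence has infimum \(0\); so \(K\) contains a tail \((0,\varepsilon]\), and therefore \(K\supset cl\bigl((0,\varepsilon]\bigr)\supset X\) --- \emph{all} of \(X\), not ``part of \(X\)'' as in your trichotomy. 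This is where properness of \(I\) actually enters: choosing \(x\in X\setminus I\) gives \(\delta>0\) with \(H(K,I)\geq\delta\) for every subcontinuum \(K\supset X\), so since \(A_n'\to I\), for large \(n\) the continuum \(A_n'\) cannot contain \(X\), and terminality then forces \(A_n'\subset X\) or \(A_n'\cap X=\emptyset\) outright. No surgery is needed (for the finitely many bad indices one simply takes \(A_n=\{a_n\}\)); this is precisely the paper's argument.

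Your surgery is not merely roundabout, it is unrepairable as stated: in a genuinely mixed case \(A_n\cap X\neq\emptyset\neq A_n\cap(0,1]\), the set \(A_n\cap(0,1]\) consists of tail intervals, so the ``component of \(A_n\cap(0,1]\) containing \(a_n\)'' is a half-open tail, hence not compact, and its closure in \(Y\) adds back all of \(X\); there is no subarc of the ray through \(a_n\) inside \(A_n\) converging to \(I\), and no control of ``how the ray accumulates onto \(I\)'' can produce one. The role of properness of \(I\) is to \emph{exclude} the mixed case, not to make the extraction feasible. Two smaller points: your remark that the case \(a,b\in X\) with \(I=X\) needs separate treatment in the sufficiency direction is well taken (the paper passes over it silently); it is handled directly, not by Kelley behavior of the ray, by taking \(A_n=X\) when \(a_n\in X\) and \(A_n=X\cup T_n\), where \(T_n\) is the tail of the ray ending at \(a_n\), when \(a_n\in(0,1]\). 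Also, your reduction ``it suffices to check \(a,b\in X\) and \(I\subset X\)'' again needs terminality: if \(I\) met the ray, then \(X\subsetneq I\) and \(X\) would witness that \(I\) is not irreducible between \(a\) and \(b\). So the one fact you never isolate --- terminality of the remainder --- is what both directions of the proof actually rest on.
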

\begin{proof}
{\it Necessity}. Assume that \(Y\) is a semi-Kelley. Let \(a,b\in X\), let \(I\in C(X)\) be an irreducible proper subcontinuum between \(a\) and \(b\), and let \(\{a_n\}_{n=1}^\infty\), \(\{b_n\}_{n=1}^\infty\) be sequences in \(Y\) converging to \(a\) and \(b\), respectively. By Theorem \ref{equivalencia}, without loss of generality there exists a sequence \(\{A_n'\}_{n=1}^\infty\) in \(C(Y)\) converging to \(I\) with \(a_n\in A_n'\) for each \(n\in\mathbb N\). Since \(X\) is a terminal subcontinuum of \(Y\) and \(\{A_n'\}_{n=1}^\infty\) converges to \(I\), there exists \(M\in\mathbb N\) such that for each \(n\geq M\), \(A_n'\subset X\) or \(A_n'\cap X=\emptyset\). For each \(n<M\), define \(A_n=\{a_n\}\) and for each \(n\geq M\), define \(A_n=A_n'\), thus the sequence \(\{A_n\}_{n=1}^\infty\) converges to \(I\) and \(a_n\in A_n\subset X\) or \(a_n\in A_n\subset(0,1]\) for each \(n\in\mathbb N\). This completes the proof of the necessity.

{\it Sufficiency}. Assume for each \(a,b\in X\), for each \(I\in C(X)\) irreducible proper subcontinuum between \(a\) and \(b\), and for each sequence \(\{a_n\}_{n=1}^\infty\) in \(Y\) converging to \(a\) there exists a sequence \(\{A_n\}_{n=1}^\infty\) in \(C(Y)\) converging to \(I\) such that \(a_n\in A_n\subset X\) or \(a_n\in A_n\subset(0,1]\) for each \(n\in\mathbb N\), or for each sequence \(\{b_n\}_{n=1}^\infty\) in \(Y\) converging to \(b\) there exists a sequence \(\{B_n\}_{n=1}^\infty\) in \(C(Y)\) converging to \(I\) such that \(b_n\in B_n\subset X\) or \(b_n\in B_n\subset(0,1]\) for each \(n\in\mathbb N\). We will prove that \(Y\) is semi- Kelley. Let \(a,b\in Y\), let \(I\in C(Y)\) be irreducible between \(a\) and \(b\), and let \(\{a_n\}_{n=1}^\infty\), \(\{b_n\}_{n=1}^\infty\) be sequences in \(Y\) converging to \(a\) and \(b\), respectively. If \(a\in(0,1]\) or \(b\in(0,1]\), then \(Y\) has the property of Kelley at \(a\) or \(b\), thus there exists a sequence \(\{A_n\}_{n=1}^\infty\) in \(C(Y)\) converging to \(I\) with \(a_n\in A_n\) for each \(n\in\mathbb N\), or there exists a sequence \(\{B_n\}_{n=1}^\infty\) in \(C(Y)\) converging to \(I\) with \(b_n\in B_n\) for each \(n\in\mathbb N\). Now, suppose that \(a,b\in X\), observe that \(I\in C(X)\); then by hypothesis there exists a sequence \(\{A_n\}_{n=1}^\infty\) in \(C(Y)\) converging to \(I\) such that \(a_n\in A_n\subset X\) or \(a_n\in A_n\subset(0,1]\) for each \(n\in\mathbb N\), or there exists a sequence \(\{B_n\}_{n=1}^\infty\) in \(C(Y)\) converging to \(I\) such that \(b_n\in B_n\subset X\) or \(b_n\in B_n\subset(0,1]\) for each \(n\in\mathbb N\). By Theorem \ref{equivalencia}, \(Y\) is semi-Kelley. The proof is complete.
\end{proof}

Acosta and Illanes proved that Kelley remainders are atriodic, see \cite[Theorem 6.2]{AI00}. We prove the analogous result for semi-Kelley continua.
\begin{theorem}
Let \(X\) be a semi-Kelley remainder. Then, \(X\) is atriodic.
\end{theorem}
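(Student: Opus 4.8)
The plan is to argue by contradiction, using Lemma \ref{equivalenciaCompactacion} as the main tool. Suppose \(X\) is the remainder of a semi-Kelley compactification \(Y=X\cup(0,1]\) and that \(X\) contains a triod. Fix a \(3\)-od \(T\subset X\) with core \(B\) and let \(U_1,U_2,U_3\) be three of the components of \(T\setminus B\). Choose points \(a\in U_1\) and \(b\in U_2\), and let \(I\) be an irreducible subcontinuum of \(X\) between \(a\) and \(b\) contained in \(cl(U_1)\cup B\cup cl(U_2)\). The presence of the third leg is what makes the argument possible: since \(cl(U_1)\cup B\cup cl(U_2)\) is a proper subcontinuum of \(X\) containing \(a\) and \(b\), we get \(I\neq X\), so \(I\) is an irreducible \emph{proper} subcontinuum and Lemma \ref{equivalenciaCompactacion} applies to the triple \((a,b,I)\). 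Moreover \(I\) can be taken to miss \(U_3\), i.e. \(I\cap U_3=\emptyset\); this disjointness from the third leg will be the source of the contradiction.

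Next I would unwind what ``following'' can mean for this triple. Recall that \(X\) is a terminal subcontinuum of \(Y\) and that, since \(X\) is the remainder, \((0,1]\) accumulates onto every point of \(X\); in particular there are sequences in \((0,1]\) converging to \(a\) and to \(b\). For such a ray sequence \(a_n\to a\), the option \(a_n\in A_n\subset X\) in Lemma \ref{equivalenciaCompactacion} is impossible because \(a_n\notin X\), so any witnessing sequence must satisfy \(a_n\in A_n\subset(0,1]\); thus the \(A_n\) are subarcs of the ray, and since \(A_n\to I\subset X\) with \(X\) terminal, these subarcs must be \emph{deep}, escaping into arbitrarily small tails of \((0,1]\). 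The goal is then to contradict the conclusion of Lemma \ref{equivalenciaCompactacion}: to show that, for a suitable choice of the sequences \(\{a_n\}\) and \(\{b_n\}\), neither the \(a\)-side nor the \(b\)-side admits such following. Equivalently, one recasts the situation in terms of maximal limit continua: I would use the deep ray subarcs to realize two non-nested subcontinua of a well-chosen proper subcontinuum \(K\) (for instance \(K=I\)) as \emph{maximal} limit continua, the third leg \(U_3\) guaranteeing maximality — any enlargement that would fuse the two limits has to leave \(K\) by entering \(U_3\) (whence its limit meets \(U_3\) and cannot lie in \(K\)), or else, by terminality, must contain a whole deep tail of the ray and hence converge to a set meeting \(U_3\). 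Two non-nested maximal limit continua of \(K\) contradict the semi-Kelley property of \(Y\).

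The main obstacle is exactly this last step. Lemma \ref{equivalenciaCompactacion} only asks that \emph{one} of the two sides be followable, so the construction must defeat both sides simultaneously, and a triod need not obstruct following in a naive ``the ray must pass the third leg'' fashion — how the ray threads among the three legs near the core \(B\) genuinely matters. The technical heart is therefore to choose the approximating subarcs (and the subcontinuum \(K\)) so that the third leg \(U_3\) blocks enlargement on both sides at once, producing the two non-nested maximal limit continua; this requires careful control of the limit behaviour of subarcs of \((0,1]\) near \(B\), together with the terminality of \(X\) to force competing enlargements to capture a deep tail. The remaining ingredients — existence of the irreducible proper \(I\), the density and limit-set behaviour of the ray, and the reduction to deep subarcs — are routine.
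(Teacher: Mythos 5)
There is a genuine gap, and you have in fact flagged it yourself: the ``technical heart'' you defer --- defeating both sides of Lemma \ref{equivalenciaCompactacion} simultaneously for a single irreducible continuum \(I\) between \(a\in U_1\) and \(b\in U_2\) --- is precisely the proof, and the mechanism you sketch for it does not hold up. You claim that the third leg \(U_3\) ``blocks enlargement on both sides at once'' because any enlargement fusing the two candidate limits ``has to leave \(K\) by entering \(U_3\).'' But an enlargement of a subarc of the ray is just a larger subarc of the ray, and its limit is some larger subcontinuum of \(X\); nothing forces that limit to meet \(U_3\) rather than grow inside \(cl(U_1)\cup B\cup cl(U_2)\), which already contains \(K=I\). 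Indeed, the ray can perfectly well trace out \(I\) from the \(a\)-side: a triod in the remainder does not obstruct following for this particular \(I\), so no choice of sequences \(\{a_n\}\), \(\{b_n\}\) will yield the contradiction from this single triple \((a,b,I)\) alone.

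The paper's proof gets the contradiction by a genuinely different, iterated scheme that your setup does not reach. It places one point on \emph{each} leg (\(a\in J\), \(b\in K\), \(c\in L\)), applies the lemma to \(I(a,c)\subset H\cup J\cup L\) and to \(I(b,c)\subset H\cup K\cup L\), and (after cutting down) obtains followers \(A_n\), \(B_n\) that are \emph{arcs in the ray} with endpoints converging to \(a,c\) and \(b,c\) respectively. It then chooses points \(a'\in(I(a,c)\cap J')\setminus H\) and \(b'\in(I(b,c)\cap K')\setminus H\) near the core, so that \(I(a',b')\subset J'\cup K'\) misses \(\{a,b,c\}\) and \(b'\notin I(a,c)\), takes witness sequences \(a_n'\in A_n\), \(b_n'\in B_n\), and applies the lemma a \emph{third} time to \(I(a',b')\). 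The punchline is purely about arcs in \((0,1]\): a follower \(M_n\) of \(I(a',b')\) containing \(a_n'\in A_n\) cannot be contained in \(A_n\) (since \(b'\notin I(a,c)\)), and a subarc of the ray meeting \(A_n\) without being contained in it must contain an endpoint \(a_n\) or \(d_n\) of \(A_n\); passing to the limit forces \(a\in I(a',b')\) or \(c\in I(a',b')\), contradicting \(\{a,b,c\}\cap I(a',b')=\emptyset\). This interplay between followers of several distinct irreducible continua, keyed to the endpoint structure of arcs in the ray, is what replaces your unproven ``blocking'' claim; without it, your outline is a correct framing of the problem but not a proof.
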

\begin{proof}
Let \(Y=X\cup(0,1]\) be a semi-Kelley compactification of \((0,1]\) with remainder \(X\).
Assume \(T\subset X\) is a 3-od with core \(H\). Let \(J,K,L\) be distinct components of \(T\setminus H\); by \cite[Corollary 5.9]{N92}, \(H\cup J\), \(H\cup K\), and \(H\cup L\) are subcontinua of \(T\). Let \(a\in J,\) \(b\in K\), and \(c\in L\). Let \(\{a_n\}_{n=1}^\infty\), \(\{b_n\}_{n=1}^\infty\), and \(\{c_n\}_{n=1}^\infty\) be sequences in \((0,1]\) converging to \(a,\) \(b\), and \(c\), respectively.

Let \(I(a,c)\) be irreducible between \(a\) and \(c\) with \(I(a,c)\subset H\cup J\cup L\). By Lemma \ref{equivalenciaCompactacion}, since \(Y\) is semi-Kelley, without loss of generality we may assume that there exists a sequence \(\{A_n'\}_{n=1}^\infty\) in \(C(Y)\) converging to \(I(a,c)\) with \(a_n\in A_n'\subset(0,1]\) for each \(n\in\mathbb N\). Let \(\{d_n\}_{n=1}^\infty\) be a sequence converging to \(c\) with \(d_n\in A_n'\) for each \(n\in\mathbb N\), and let \(A_n\subset A_n'\) be the arc joining \(a_n\) and \(d_n\), notice that \(\{A_n\}_{n=1}^\infty\) converges to \(I(a,c)\).

Let \(I(b,c)\) be irreducible between \(b\) and \(c\) with \(I(b,c)\subset H\cup K\cup L\). Similarly as above and without loss of generality, there exists a sequence \(\{e_n\}_{n=1}^\infty\) in \((0,1]\) converging to \(c\) and there exists a sequence \(\{B_n\}_{n=1}^\infty\) of subcontinua of \((0,1]\) converging to \(I(b,c)\), where for each \(n\in\mathbb N\), \(B_n\) is the arc in \((0,1]\) joining \(b_n\) and \(e_n\).

Let \(J'\) be a subcontinuum of \(H\cup J\) with \(H\subsetneq J'\), \(a\notin J'\), and let \(K'\) be a subcontinuum of \(H\cup K\) with \(H\subsetneq K'\), \(b\notin K'\). Let \(a'\in (I(a,c)\cap J')\setminus H\), let \(b'\in (I(b,c)\cap K')\setminus H\) and let \(I(a',b')\) be irreducible between \(a'\) and \(b'\) with \(I(a',b')\subset J'\cup K'\). Notice \(\{a,b,c\}\cap I(a',b')=\emptyset\), \(b'\notin I(a,c)\) and \(a'\notin I(b,c)\). Since \(\{A_n\}_{n=1}^\infty\) converges to \(I(a,c)\) and \(a'\in I(a,c)\), we can choose a sequence \(\{a_n'\}_{n=1}^\infty\) converging to \(a'\) with \(a_n'\in A_n\) for each \(n\in\mathbb N\); analogously choose a sequence \(\{b_n'\}_{n=1}^\infty\) converging to \(b'\) with \(b_n'\in B_n\) for each \(n\in\mathbb N\). By Lemma \ref{equivalenciaCompactacion}, since \(Y\) is semi-Kelley, without loss of generality assume there exists a sequence \(\{M_n\}_{n=1}^\infty\) of subcontinua of \((0,1]\) converging to \(I(a',b')\) with \(a_n'\in M_n\) for each \(n\in\mathbb N\). Since \(b'\notin I(a,c)\) and \(\{A_n\}_{n=1}^\infty\) converges to \(I(a,c)\), then \(M_n\not\subset A_n\) for \(n\) sufficiently large. Recall that \(A_n\) is an arc with \(a_n\) and \(d_n\) as its end points. Hence, there exist \(S\subset\mathbb N\) infinite with \(a_n\in M_n\) for each \(n\in S\) or \(d_n\in M_n\) for each \(n\in S\), in the first case we have that \(a\in I(a',b')\), and in the second case we have that \(c\in I(a',b')\), which is a contradiction. The proof is complete.
\end{proof}

For a continuum \(X\), a {\it chain} in \(X\) is a non-empty, finite collection \(\mathcal C=\{U_1,\dots,U_n\}\) of non-empty open sets \(U_i\) of \(X\) for each \(i\in \{1,\dots,n\}\), such that \(U_i\cap U_j\neq\emptyset\) if and only if \(|i-j|\leq 1\), the members of a chain \(\mathcal C\) are called {\it links} of \(\mathcal C\). For \(\varepsilon>0\), a chain \(\mathcal C\) is
an {\it \(\varepsilon\)-chain} if mesh\((\mathcal C)<\varepsilon\), where \({\text {mesh}}(\mathcal{C})=\max\{{\text {diam}}(U_i):1\leq i\leq n\}\), and diam\((U_i)\) is the diameter of \(U_i\). A continuum \(X\) is {\it chainable} provided that for each \(\varepsilon>0\), there is an \(\varepsilon\)-chain covering to \(X\).
\begin{theorem}\label{chainable}
Let \(X\) be a semi-Kelley chainable continuum. Then, \(X\) is a semi-Kelley remainder.
\end{theorem}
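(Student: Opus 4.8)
The plan is to construct explicitly a semi-Kelley compactification \(Y=X\cup(0,1]\) with remainder \(X\), and then to verify the semi-Kelley condition by means of Lemma \ref{equivalenciaCompactacion}. The compactifying ray is built following the scheme of Beane and Charatonik for chainable Kelley continua \cite{BC08}: using chainability, I would fix a sequence of open chains \(\mathcal C_n=\{U^n_1,\dots,U^n_{k_n}\}\) covering \(X\) with \(\text{mesh}(\mathcal C_n)\to 0\), each \(\mathcal C_{n+1}\) refining \(\mathcal C_n\), and thread a ray \(R\cong(0,1]\) through \(X\) so that in its \(n\)-th block it traverses the links of \(\mathcal C_n\) consecutively (reversing direction between blocks as needed). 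One then checks, as in \cite{BC08}, that \(R\) is dense in \(Y\), disjoint from \(X\), and has limit set exactly \(X\), so that \(Y\) is a compactification of \((0,1]\) with remainder \(X\). This part of the argument is standard and I would only indicate the verifications.

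To check that \(Y\) is semi-Kelley I would invoke Lemma \ref{equivalenciaCompactacion}. Fix \(a,b\in X\), let \(I\in C(X)\) be an irreducible proper subcontinuum between \(a\) and \(b\), and let \(\{a_n\}_{n=1}^\infty\) be a sequence in \(Y\) converging to \(a\). Since \(X\) is semi-Kelley, Theorem \ref{equivalencia} lets me assume without loss of generality that \(a\) is the distinguished point, i.e.\ that for every sequence in \(X\) converging to \(a\) there are subcontinua of \(X\) through the respective points converging to \(I\). I would then split \(\{a_n\}_{n=1}^\infty\) into the indices with \(a_n\in X\) and those with \(a_n\in R\). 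For the first set, the semi-Kelley property of \(X\) supplies subcontinua \(A_n\subset X\) with \(a_n\in A_n\) converging to \(I\). For the second set, I would use the chain structure: in a chainable continuum the irreducible subcontinuum between \(a\) and \(b\) is the limit of the connected unions of the consecutive links running from a link meeting \(a\) to a link meeting \(b\); hence for large \(n\) the portion of \(R\) sweeping exactly these links is an arc in \(R\) that approximates \(I\) and can be taken to contain \(a_n\). Combining the two families yields a sequence \(\{A_n\}_{n=1}^\infty\) in \(C(Y)\) converging to \(I\) with \(a_n\in A_n\subset X\) or \(a_n\in A_n\subset(0,1]\) for each \(n\), which is precisely the hypothesis of Lemma \ref{equivalenciaCompactacion}; therefore \(Y\) is semi-Kelley and \(X\) is a semi-Kelley remainder.

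The main obstacle is the ray-side step: arranging the threading of \(R\) and proving that, from any point \(a_n\in R\) lying near \(a\), one can follow \(R\) to produce an arc \(A_n\subset R\) converging to the prescribed irreducible continuum \(I\), and not to some strictly larger subcontinuum. This rests on the characterization of irreducible subcontinua of chainable continua as limits of consecutive-link unions, together with careful bookkeeping so that the traversal of the refining chains permits sweeping exactly the links that approximate \(I\) starting from a prescribed point. By contrast, the remainder-side contribution (indices with \(a_n\in X\)) is routine once \(X\) is known to be semi-Kelley via Theorem \ref{equivalencia}; and it is exactly this step, available for only one of the two points \(a,b\), that accounts for \(Y\) being semi-Kelley rather than Kelley.
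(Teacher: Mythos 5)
Your overall strategy (build a ray threaded along chains of small mesh, then verify the hypothesis of Lemma \ref{equivalenciaCompactacion}) matches the paper's, and your treatment of the indices with \(a_n\in X\) is fine. The gap is in the ray-side step, which you correctly flag as the main obstacle but propose to close with a claim that is false: it is not true that the irreducible continuum \(I\) between \(a\) and \(b\) is the limit of the unions of the consecutive links running from a link meeting \(a\) to a link meeting \(b\). In the Knaster bucket handle, \(a\) and \(b\) can lie in adjacent links of a chain while \(I\) (an arc winding far around the turns) meets a much longer subchain; the union of the short span between \(a\)'s link and \(b\)'s link does not even contain a continuum joining \(a\) to \(b\). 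The correct statement is that the subchain \(\mathcal C_m(I)\) of \emph{all} links meeting \(I\) has union converging to \(I\), since \(I\) is covered by those links and each of them meets \(I\).

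Even after this correction your argument fails, for a more serious reason: the arc of \(R\) lying over \(\mathcal C_{m}(I)\) need not contain \(a_n\). A point \(a_n\in R\) converging to \(a\) lies over a link that is metrically close to \(a\) but may be disjoint from \(I\) (it may cover a different local strand of \(X\) passing near \(a\)), and in the chain order that link can be arbitrarily far from the subchain \(\mathcal C_m(I)\); following the ray from \(a_n\) to the portion over \(\mathcal C_m(I)\) then sweeps a large part of \(X\) and destroys convergence to \(I\). This is exactly the phenomenon that makes the spiral over the circle fail to be a Kelley remainder, and no chain bookkeeping avoids it. The paper's key move, which your proposal lacks, is to use the semi-Kelley property of \(X\) \emph{for the ray points as well}: project each \(a_n\in R\) to a nearby point \(a_n'\in X\) (in the paper, the link-intersection point \((x_i^{m_n},0)\) attached to the segment of \(R_{m_n}\) carrying \(a_n\), with \(a_n'=a_n\) when \(a_n\in X\)), apply Theorem \ref{equivalencia} once to the whole projected sequence \(\{a_n'\}_{n=1}^\infty\subset X\) to obtain continua \(A_n'\in C(X)\) with \(a_n'\in A_n'\) and \(\lim A_n'=I\), and then lift each \(A_n'\) to the arc of \(R_{m_n}\) over the links meeting \(A_n'\); this arc contains \(a_n\) and lies within mesh-distance of \(A_n'\), hence the lifted sequence converges to \(I\). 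So the semi-Kelley hypothesis is precisely what bridges \(a_n\)'s strand to \(I\) on the ray side; your division of labor (semi-Kelley only for \(a_n\in X\), pure chain combinatorics for \(a_n\in R\)) has it backwards and cannot be repaired without reintroducing this projection-and-lift argument.
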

\begin{proof}
By \cite[Theorem 5]{B62}, suppose that \(X\) is embedded in \(\mathbb R^2\) in such a way that for each \(\varepsilon>0\), \(X\) can be covered by chains of mesh less than \(\varepsilon\) and whose links are open balls. Thus, we assume \(X\subset\mathbb R^2\times\{0\}\subset\mathbb R^3\). Let \(d\) be the Euclidean metric in \(\mathbb{R}^3\).

Given \(n\in\mathbb N\), fix a chain \(\mathcal C_n=\{U_1^n,\dots,U_{r(n)}^n\}\) of open balls in \(\mathbb R^3\), such that \(\mathcal C_n\) covers \(X\) and mesh(\(\mathcal C_n\))\( <\frac 1n\), we suppose \(r(n)>2\). For each \(Z\in C(X)\), let \(\mathcal C_n(Z)=\{U\in \mathcal C_n:U\cap Z\neq\emptyset\}\).

For each \(i\in\{1,\dots,r(n)-1\}\), we fix a point \((x_i^n,0)\in U_i^n\cap U_{i+1}^n\cap X\). Let \(x_i^nx_{i+1}^n\) be the convex segment in \(\mathbb R^2\) joining \(x_i^n\) and \(x_{i+1}^n\). Let \(R_n=\bigcup\{x_i^nx_{i+1}^n\times\{\frac 1n\}:i\in\{1,\dots,r(n)-2\}\}\). Notice that \(R_n\) is an arc and, since mesh(\(\mathcal C_n\))\(<\frac 1n\), \(\lim R_n=X\). Taking a subsequence if necessary, we may assume that the sequence \(\{(x_1^n,0)\}_{n=1}^\infty\) converges to a point \((x,0)\in X\), that the sequence \(\{(x_{r(n)-1}^n,0)\}_{n=1}^\infty\) converges to a point \((y,0)\in X\), and if \(n>1\), we have \((x_1^n,0)\in U_1^{n-1}\) and \((x_{r(n)-1}^n,0)\in U_{r(n-1)}^{n-1}\). Let \(S_n\) be the convex segment in \(\mathbb R^3\) joining \((x_{r(n)-1}^n,\frac 1n)\) and \((x_{r(n+1)-1}^{n+1},\frac 1{n+1})\) if \(n\) is odd, and joining \((x_{1}^n,\frac 1n)\) and \((x_{1}^{n+1},\frac 1{n+1})\) if \(n\) is even. Notice that \(\lim S_{2n}=\{(x,0)\}\) and \(\lim S_{2n+1}=\{(y,0)\}\).

Let \(R=\bigcup\{R_n\cup S_n:n\in\mathbb N\}\), let \(Y=X\cup R\) and notice \(R\) is homeomorphic to \((0,1]\) and \(Y\) is a compactificacion of \(R\) with remainder \(X\).

Now, we prove that \(Y\) is semi-Kelley using Lemma \ref{equivalenciaCompactacion}. Let \(a,b\in X\), let \(I\in C(X)\) be irreducible proper subcontinuum between \(a\) and \(b\), and let \(\{a_n\}_{n=1}^\infty\), \(\{b_n\}_{n=1}^\infty\) be sequences in \(Y\) converging to \(a\) and \(b\), respectively. We define two sequences \(\{a_n'\}_{n=1}^\infty\) and \(\{b_n'\}_{n=1}^\infty\) in \(X\) in the following way: Given \(n\in\mathbb{N}\), if \(a_n\in X\), we let \(a_n'=a_n\); if \(a_n\in R_{m_n}\) for some \(m_n\in\mathbb{N}\), choose \(i\in\{1,\dots,r(m_n)-2\}\) such that \(a_n\in x_i^{m_n}x_{i+1}^{m_n}\times\{\frac{1}{m_n}\}\) and we let \(a_n'=(x_i^{m_n},0)\); if \(a_n\in S_{m_n}\) for some \(m_n\in\mathbb{N}\), we let \(a_n'=(x_{r(m_n)-1}^{m_n},0)\) if \(m_n\) is odd, and \(a_n'=(x_{1}^{m_n},0)\) if \(m_n\) is even; analogously we define the sequence \(\{b_n'\}_{n=1}^\infty\). Since \(d(a_n,a_n')<\frac{2}{m_n}\) for each \(a_n\in R\) and \(\lim m_n=\infty\), then \(\{a_n'\}_{n=1}^\infty\) converges to \(a\); analogously \(\{b_n'\}_{n=1}^\infty\) converges to \(b\).

By Theorem \ref{equivalencia}, since \(X\) is semi-Kelley, without loss of generality there exists a sequence \(\{A_n'\}_{n=1}^\infty\) in \(C(X)\) converging to \(I\) such that \(a_n'\in A_n'\) for each \(n\in\mathbb N\).

Now, we will construct a sequence \(\{A_n\}_{n=1}^\infty\) in \(C(Y)\) converging to \(I\) such that \(a_n\in A_n\subset X\) or \(a_n\in A_n\subset R\) for each \(n\in\mathbb N\). Given \(n\in\mathbb{N}\), if \(a_n\in X\), we let \(A_n=A_n'\); if \(a_n\in R_{m_n}\), let \(A_n=\bigcup\{x_i^{m_n}x_{i+1}^{m_n}\times\{\frac{1}{m_n}\}:i\in\{1,\dots,r_{m_n}-2\}\text{ and }U_i^{m_n}\in \mathcal C_{m_n}(A_n')\}\); if \(a_n\in S_{m_n}\), let \(A_n=S_{m_n}\cup \bigcup\{x_i^{m_n}x_{i+1}^{m_n}\times\{\frac{1}{m_n}\}:i\in\{1,\dots,r_{m_n}-2\}\text{ and }U_i^{m_n}\in \mathcal C_{m_n}(A_n')\}\).

Notice \(a_n\in A_n\subset X\) or \(a_n\in A_n\subset R\) for each \(n\in\mathbb N\), and \(A_n\) is a subcontinuum of \(Y\). Since \(H(A_n',A_n)<\frac{2}{m_n}\) for each \(a_n\in R\) and \(\lim m_n=\infty\), then the sequence \(\{A_n\}_{n=1}^\infty\) converges to \(I\). By Lemma \ref{equivalenciaCompactacion}, \(Y\) is semi-Kelley. This finishes the proof.
\end{proof}

A {\it circular chain} in \(X\) is a non-empty, finite collection \(\mathcal C=\{U_1,\dots,U_n\}\) of non-empty open sets \(U_i\) of \(X\) for each \(i\in\{1,\dots,n\}\), such that \(U_i\cap U_j\neq\emptyset\) if and only if \(|i-j|\leq 1\) or \(\{i,j\}=\{1,n\}\). For \(\varepsilon>0\), a circular chain \(\mathcal C\) is an {\it \(\varepsilon\)-circular chain} if mesh\((\mathcal C)<\varepsilon\). A continuum \(X\) is {\it circularly chainable} provided that for each \(\varepsilon>0\), there is an \(\varepsilon\)-circular chain covering to \(X\).
\begin{theorem}\label{circle}
Let \(X\) be a semi-Kelley circularly chainable continuum. Then, \(X\) is a semi-Kelley remainder.
\end{theorem}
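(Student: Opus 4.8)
The plan is to imitate the proof of Theorem \ref{chainable} almost verbatim, replacing chains by circular chains and verifying the criterion of Lemma \ref{equivalenciaCompactacion}. The one conceptual difference is that a circularly chainable continuum need not be planar — for instance, solenoids are circle-like and, being homogeneous, are Kelley and hence semi-Kelley, so they do fall under the hypothesis. I therefore would not embed \(X\) in \(\mathbb R^2\times\{0\}\); instead I would fix an embedding of \(X\) into \(\mathbb R^3\) (circle-like continua embed in \(\mathbb R^3\), realized as the intersection of a nested sequence of solid tori) for which, for every \(\varepsilon>0\), \(X\) is covered by a circular chain whose links are open balls of mesh less than \(\varepsilon\); this is the circle-like analogue of the snake-like embedding of \cite[Theorem 5]{B62} invoked before, and the ray construction below parallels \cite{C11}. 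I then regard \(X\subset\mathbb R^3\times\{0\}\subset\mathbb R^4\) and build the compactifying ray one dimension up, at heights \(\frac1n\) in the last coordinate.

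Concretely, for each \(n\) I would fix a circular chain \(\mathcal C_n=\{U_1^n,\dots,U_{r(n)}^n\}\) of open balls covering \(X\) with mesh less than \(\frac1n\), choose points \((x_i^n,0)\in U_i^n\cap U_{i+1}^n\cap X\) for consecutive links, and form the polygonal path \(R_n\) through the lifted points \((x_i^n,\frac1n)\). The crucial adjustment is that I would leave out the wrap-around segment coming from the pair \(\{U_{r(n)}^n,U_1^n\}\), so that \(R_n\) is an \emph{arc} rather than a closed polygon; since the omitted segment has length less than \(\frac2n\), we still have \(\lim R_n=X\). Joining consecutive arcs \(R_n,R_{n+1}\) by short segments \(S_n\) between matching endpoints, alternating between the two ends exactly as in Theorem \ref{chainable}, produces a set \(R\) homeomorphic to \((0,1]\) such that \(Y=X\cup R\) is a compactification of \((0,1]\) with remainder \(X\).

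To see that \(Y\) is semi-Kelley I would verify the hypothesis of Lemma \ref{equivalenciaCompactacion}. Given \(a,b\in X\), an irreducible proper subcontinuum \(I\in C(X)\) between \(a\) and \(b\), and a sequence \(\{a_n\}_{n=1}^\infty\) in \(Y\) converging to \(a\), I would project each \(a_n\) to a nearby vertex \(a_n'\in X\) (setting \(a_n'=a_n\) when \(a_n\in X\), and otherwise taking the chain vertex used to build the link of \(R\) containing \(a_n\)), so that \(a_n'\to a\). By Theorem \ref{equivalencia} applied to the semi-Kelley continuum \(X\), after possibly switching the roles of \(a\) and \(b\), there is a sequence \(\{A_n'\}_{n=1}^\infty\) in \(C(X)\) converging to \(I\) with \(a_n'\in A_n'\). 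I would then lift \(A_n'\) to a subset \(A_n\) of \(Y\) by taking, at the appropriate height \(\frac1{m_n}\), the union of the polygonal segments indexed by the links \(U_i^{m_n}\in\mathcal C_{m_n}(A_n')\) that meet \(A_n'\) (together with \(S_{m_n}\) when \(a_n\in S_{m_n}\)), arranged so that \(a_n\in A_n\subset X\) or \(a_n\in A_n\subset R\).

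The step I expect to be the main obstacle — and the only genuine difference from the chainable argument — is showing that this lift \(A_n\) is \emph{connected}. In a circular chain the collection of links meeting a subcontinuum can a priori run all the way around, in which case the associated polygonal set would be a circle rather than an arc and would fail to be a subcontinuum of the ray. This is exactly where the requirement in Lemma \ref{equivalenciaCompactacion} that \(I\) be a \emph{proper} subcontinuum is used: since \(I\neq X\) and \(A_n'\to I\), one can fix an open ball meeting \(X\) but disjoint from a neighborhood of \(I\), and then \(A_n'\) misses that ball for all large \(n\); hence \(\mathcal C_{m_n}(A_n')\) omits at least one link and, being the collection of links met by the connected set \(A_n'\), is a proper arc of consecutive links of the circular chain, which forces \(A_n\) to be an arc. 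Once connectedness is in hand, the Hausdorff estimate \(H(A_n',A_n)<\frac2{m_n}\to0\) gives \(A_n\to I\) just as in Theorem \ref{chainable} (the finitely many initial terms being replaced by \(\{a_n\}\)), and Lemma \ref{equivalenciaCompactacion} yields that \(Y\) is semi-Kelley. Therefore \(X\) is a semi-Kelley remainder.
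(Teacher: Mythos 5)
Your construction of the ray and your reduction to Lemma \ref{equivalenciaCompactacion} follow the same strategy as the paper, and you correctly identified connectedness of the lift as the crux; but your resolution of that crux has a genuine gap. Properness of \(I\) only gives you that \(\mathcal C_{m_n}(A_n')\) omits \emph{some} link of the circular chain \(\mathcal C_{m_n}\); it does not give you that it omits the \emph{break}, i.e.\ the link pair \(\{U_{r(m_n)}^{m_n},U_1^{m_n}\}\) whose connecting segment you deleted to turn the closed polygon into the arc \(R_{m_n}\). The collection of links meeting the connected set \(A_n'\) is indeed a consecutive run in the \emph{circular} order, but that run may wrap around the break (for instance \(\{U_{r(m_n)-1}^{m_n},U_{r(m_n)}^{m_n},U_1^{m_n},U_2^{m_n}\}\)) while still omitting a link elsewhere. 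In that case the union of lifted segments inside the arc \(R_{m_n}\) is \emph{disconnected}: it consists of one piece near each end of \(R_{m_n}\), and the segment that would join them is exactly the one you removed. Since the chains must be fixed before \(I\) is given (the compactification is built first, and then Lemma \ref{equivalenciaCompactacion} must be verified for all \(a,b,I\)), you cannot arrange the breaks to avoid \(I\); whenever \(I\) meets the region where the breaks sit, your lift fails to be a subcontinuum of the ray and the argument stops.

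This is precisely where the paper needs an extra idea beyond the chainable case. It fixes a point \(z\in X\) at which \(X\) is Kelley (such a point exists by \cite[Theorem 2.3]{W77}) and chooses every circular chain with \(z\in U_1^n\setminus U_2^n\), so that all the breaks are anchored at \(z\); moreover, following \cite{C11}, it selects a subsequence of the chains so that the resulting compactification satisfies a property (P2): for every \(Z\in C(X)\) with \(z\in Z\) there are subcontinua \(Z_k\) of the ray converging to \(Z\) and containing the connector segments \(P_k\). The verification then splits into two cases. If \(z\notin I\), the subchain \(\mathcal C_{m_n}(A_n')\) eventually avoids both \(U_1^{m_n}\) and \(U_{r(m_n)}^{m_n}\), so no wrapping occurs and an argument like yours works. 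If \(z\in I\), wrapping can genuinely occur: the paper then splits the lift into the two pieces \(J_n\) and \(L_n\), observes one of them is connected, and reconnects it through the ray by gluing on \(Z_n\) or \(Z_{n+1}\) from (P2); the existence of these \(Z_k\) is exactly where the Kelley property at \(z\) is used, since the connectors attach at the endpoints of the arcs \(R_{p_k}\), which cluster at \(z\). Your proposal has no analogue of this device, and without it the wrap-around case cannot be repaired.
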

\begin{proof}
Using a similar construction as done in \cite[Theorem 4]{B62}, suppose that \(X\) is embedded in \(\mathbb R^3\) in such a way that for each \(\varepsilon>0\), \(X\) can be covered by circular chains of mesh less than \(\varepsilon\), and whose links are open balls. Thus, we assume \(X\subset\mathbb R^3\times\{0\}\subset\mathbb R^4\). Let \(d\) be the Euclidean metric in \(\mathbb{R}^4\).
	
Fix \(z\in X\) such that \(X\) is Kelley at \(z\). Given \(n\in\mathbb N\), fix a circular chain \(\mathcal C_n=\{U_1^n,\dots,U_{r(n)}^n\}\) of open balls in \(\mathbb R^4\), such that \(\mathcal C_n\) covers \(X\) and mesh(\(\mathcal C_n\))\( <\frac 1n\), moreover we suppose \(r(n)>3\) and \(z\in U_1^n\setminus U_2^n\). For each \(Z\in C(X)\), let \(\mathcal C_n(Z)=\{U\in \mathcal C_n:U\cap Z\neq\emptyset\}\).

For each \(i\in\{1,\dots,r(n)\}\), we fix a point \((x_i^n,0)\in U_i^n\cap U_{i\oplus_{r(n)}1}^n\cap X\), where \(\oplus_{r(n)}\) is the sum module \(r(n)\). For \(x,y\in\mathbb R^3\), let \(xy\) be the convex segment in \(\mathbb R^3\) joining \(x\) and \(y\). Let \(R_n=\bigcup\{x_i^nx_{i+1}^n\times\{\frac 1n\}:i\in\{1,\dots,r(n)-1\}\}\) and let \(S_n=R_n\cup (x_{r(n)}^nx_{1}^n\times\{\frac 1n\})\). Notice that \(S_n\) is a circle, \(R_n\) is an arc in \(S_n\), since mesh(\(\mathcal C_n\))\(<\frac 1n\), we have \(\lim S_n=X=\lim R_n\) and \(\lim x_{r(n)}^nx_{1}^n\times\{\frac 1n\}=\{z\}\).

Proceeding as in \cite[p. 172]{C11}, we can choose an increasing sequence \(\{p_k\}_{k=1}^{\infty}\) of natural numbers with the following properties (For each \(k>1\), let \(P_k\) be the convex segment in \(\mathbb{R}^4\) joining \((x_{r(p_{k-1})}^{p_{k-1}},\frac{1}{p_{k-1}})\) and \((x_{1}^{p_{k}},\frac{1}{p_{k}})\) and let \(P_1=\{(x_1^{p_1},\frac{1}{p_1})\}\)):

(P1) the set \(Y=X\cup\bigcup\{R_{p_k}\cup P_k:k\in\mathbb{N}\}\) is a compactification of the ray \(R=\bigcup\{R_{p_k}\cup P_k:k\in\mathbb{N}\}\), with remainder \(X\), and

(P2) for each \(Z\in C(X)\) with \(z\in Z\) there exists a sequence \(\{Z_k\}_{k=1}^\infty\) of subcontinua of \(Y\) converging to \(Z\) with \(P_k\subset Z_k\subset R\) for each \(k\in\mathbb N\).

Now, we prove that \(Y\) is semi-Kelley using Lemma \ref{equivalenciaCompactacion}. Let \(a,b\in X\), let \(I\in C(X)\) be irreducible proper subcontinuum between \(a\) and \(b\), and let \(\{a_n\}_{n=1}^\infty\), \(\{b_n\}_{n=1}^\infty\) be sequences in \(Y\) converging to \(a\) and \(b\), respectively. We define two sequences \(\{a_n'\}_{n=1}^\infty\) and \(\{b_n'\}_{n=1}^\infty\) in \(X\) in the following way: Given \(n\in\mathbb{N}\), if \(a_n\in X\), we let \(a_n'=a_n\); if \(a_n\in R_{m_n}\) for some \(m_n\in\mathbb{N}\), choose \(i\in\{1,\dots,r(m_n)-1\}\) such that \(a_n\in x_i^{m_n}x_{i+1}^{m_n}\times\{\frac{1}{m_n}\}\) and we let \(a_n'=(x_i^{m_n},0)\); if \(a_n\in P_{m_n}\) for some \(m_n\in\mathbb{N}\), we let \(a_n'=(x_{1}^{p_{m_n}},0)\). Analogously we define the sequence \(\{b_n'\}_{n=1}^\infty\). Since \(d(a_n,a_n')<\frac{2}{m_n}\) for each \(a_n\in R\) and \(\lim m_n=\infty\), then \(\{a_n'\}_{n=1}^\infty\) converges to \(a\); analogously \(\{b_n'\}_{n=1}^\infty\) converges to \(b\).

By Theorem \ref{equivalencia}, without loss of generality, assume there exists a sequence \(\{A_n'\}_{n=1}^\infty\) in \(C(X)\) converging to \(I\) such that \(a_n'\in A_n'\) for each \(n\in\mathbb N\). We will construct a sequence \(\{A_n\}_{n=1}^\infty\) in \(C(Y)\) converging to \(I\) such that \(a_n\in A_n\subset X\) or \(a_n\in A_n\subset R\) for each \(n\in\mathbb N\). We consider two cases:

Case 1. The subcontinuum \(I\) does not contain \(z\). Given \(n\in\mathbb{N}\), if \(a_n\in X\), we let \(A_n=A_n'\); if \(a_n\in \bigcup_{k=1}^\infty P_k\), we let \(A_n=\{a_n\}\), notice that, since \(\lim a_n=a\in I\), \(\lim P_k=\{z\}\), and \(z\notin I\), we have \(a_n\in \bigcup_{k=1}^\infty P_k\) only for a finite number of \(n\in\mathbb{N}\); if \(a_n\in R_{m_n}\) for some \(m_n\in\mathbb{N}\), notice that \(\mathcal C_{m_n}(A_n')\) is a subchain of \(\mathcal C_{m_n}\) that does not contain the links \(C_{m_n}^1\) and \(C_{m_n}^{r(m_n)}\), for sufficiently large \(n\), and let \(A_n=\bigcup\{x_i^{m_n}x_{i+1}^{m_n}\times\{\frac{1}{m_n}\}:i\in\{1,\dots,r_{m_n}-1\}\text{ and }U_i^{m_n}\in \mathcal C_{m_n}(A_n')\}\). Then, \(\{A_n\}_{n=1}^\infty\) is a sequence in \(C(Y)\) converging to \(I\) and satisfies \(a_n\in A_n\subset X\) or \(a_n\in A_n\subset R\) for each \(n\in\mathbb N\).

Case 2. The subcontinuum \(I\) contains \(z\). By (P2), let \(\{Z_k\}_{k=1}^\infty\) be a sequence in \(C(Y)\) converging to \(I\) such that \(P_k\subset Z_k\subset R\), for each \(k\in\mathbb N\). Given \(n\in\mathbb{N}\), if \(a_n\in X\), we let \(A_n=A_n'\); if \(a_n\in P_{m_n}\) for some \(m_n\in\mathbb{N}\), we let \(A_n=Z_{m_n}\); if \(a_n\in R_{m_n}\) for some \(m_n\in\mathbb{N}\), assume \(a'_n=(x_{s_n}^{m_n},0)\) for some \(s_n\in\{1,2,\dots,r(m_n)-1\}\), we consider two cases:

Case a. The subchain \(\mathcal C_{m_n}(A_n')\) does not contain the link \(C_{m_n}^1\) or does not contain the link \(C_{m_n}^{r(m_n)}\). Let \(A_n=\bigcup\{x_i^{m_n}x_{i+1}^{m_n}\times\{\frac{1}{m_n}\}:i\in\{1,\dots,r_{m_n}-1\}\text{ and }U_i^{m_n}\in \mathcal C_{m_n}(A_n')\}\).

Case b. The subchain \(\mathcal C_{m_n}(A_n')\) contains the links \(C_{m_n}^1\) and \(C_{m_n}^{r(m_n)}\). Define \(J_n=\bigcup\{x_i^{m_n}x_{i+1}^{m_n}\times\{\frac{1}{m_n}\}:i\in\{1,\dots,s_n\}\text{ and }U_i^{m_n}\in \mathcal C_{m_n}(A_n')\}\)
and \(L_n=\bigcup\{x_i^{m_n}x_{i+1}^{m_n}\times\{\frac{1}{m_n}\}:i\in\{s_n,\dots,r_{m_n}-1\}\text{ and }U_i^{m_n}\in \mathcal C_{m_n}(A_n')\}\). Since \(\mathcal C_{m_n}(A_n')\) is a subchain, then \(J_n\) is connected or \(L_n\) is connected, moreover, \(\limsup J_n\subset I\) and \(\limsup L_n\subset I\). If \(J_n\) is connected, we let \(A_n=J_n\cup Z_{n}\); if \(L_n\) is connected, we let \(A_n=L_n\cup Z_{n+1}\).

Then \(\{A_n\}_{n=1}^\infty\) is a sequence in \(C(Y)\) converging to \(I\) and satisfies \(a_n\in A_n\subset X\) or \(a_n\in A_n\subset R\) for each \(n\in\mathbb N\). By Lemma \ref{equivalenciaCompactacion}, \(Y\) is semi-Kelley. This finishes the proof.
\end{proof}

A continuum is \emph{decomposable} provided that it is the union of two proper subcontinua, and it is \emph{indecomposable} provided that it is not decomposable.  An \emph{arc continuum} is a continuum whose proper subcontinua are arcs. Given a continuum \(X\), a Whitney map for \(C(X)\)
is a mapping \(\omega:C(X)\to [0,1)\) such that \(\omega(A)<\omega(B)\) for any \(A, B\in C(X)\) with \(A\subsetneq B\), and \(\omega(A)=0\) if and only if \(A\) is a singleton. It is known that for any continuum \(X\), Whitney maps exist for \(C(X)\) \cite[Theorem 13.4]{IN99}.

\begin{theorem}
Let \(X\) be a semi-Kelley arc continuum. Then, \(X\) is a semi-Kelley remainder.
\end{theorem}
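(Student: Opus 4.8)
The plan is to mirror the constructions in Theorems \ref{chainable} and \ref{circle} and in Beane--Charatonik \cite[Theorem 3.1]{BC08}, replacing the (circular) chains by a backbone of arcs that sweeps out \(X\), and then to verify the semi-Kelley condition through Lemma \ref{equivalenciaCompactacion}. First I would fix a point \(z\in X\) at which \(X\) is Kelley; such a point exists by \cite[Theorem 2.3]{W77}. The key structural input is a sequence of arcs approximating \(X\): if \(X\) is an arc take \(B_n=X\); if \(X\) is a simple closed curve take for \(B_n\) the complement of a small arc; and in the remaining case \(X\) is indecomposable, so the composant of \(z\) is dense and, since every proper subcontinuum of \(X\) is an arc, it is arcwise connected. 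Hence one can choose an increasing sequence of arcs \(\{B_n\}_{n=1}^\infty\), each \(B_n\) an arc from \(z\) to some point \(w_n\), whose union is this dense composant, so that \(\lim B_n=X\). Passing to a subsequence I would assume \(w_n\to w\in X\), and, shrinking \(B_n\) slightly near its far endpoint if necessary, that \(w\) is also a point at which \(X\) is Kelley.

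Next I would build the ray. Embedding \(X\) in a Euclidean space, push a copy \(R_n\) of \(B_n\) off \(X\) to height \(\frac1n\), and join consecutive copies by short segments attached alternately at the \(z\)-ends and at the \(w_n\)-ends, exactly as the segments \(S_n\) are used in the proof of Theorem \ref{chainable}. Since \(\lim B_n=X\), the arcs \(R_n\) converge to \(X\); since the connectors attached at the \(z\)-ends shrink to \(z\) and those attached at the \(w_n\)-ends shrink to \(w\), the set \(R\) obtained is homeomorphic to \((0,1]\) and \(Y=X\cup R\) is a compactification of \(R\) with remainder \(X\). Using the Kelley property at \(z\) (and at \(w\)) I would record, as in property (P2) of the proof of Theorem \ref{circle}, that every subcontinuum of \(X\) containing \(z\), respectively \(w\), is the limit of a sequence of subcontinua of \(R\) each containing a connecting segment; this is what lets the ray approximate subcontinua that pass through a seam.

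To verify that \(Y\) is semi-Kelley I would apply Lemma \ref{equivalenciaCompactacion}. Let \(I\in C(X)\) be irreducible between \(a\) and \(b\), necessarily an arc since it is a proper subcontinuum, and let \(\{a_n\}\), \(\{b_n\}\) converge to \(a\) and \(b\) in \(Y\). Projecting each ray point to the base point of the link it lies on produces sequences \(\{a_n'\}\), \(\{b_n'\}\) in \(X\) converging to \(a\) and \(b\); since \(X\) is semi-Kelley, Theorem \ref{equivalencia} supplies, without loss of generality, a sequence \(\{A_n'\}\to I\) in \(C(X)\) with \(a_n'\in A_n'\). The task is to lift \(A_n'\) to a sequence \(\{A_n\}\) in \(C(Y)\) that still converges to \(I\), contains \(a_n\), and lies entirely in \(X\) or entirely in \(R\). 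When \(a_n\in X\) I take \(A_n=A_n'\). When \(a_n\) lies on the ray, \(a_n\in R_{m_n}\) with \(m_n\to\infty\), and I use that \(a_n'\) lies in the composant of \(z\), so that \(A_n'\) is traced by a sub-arc of the backbone; I would take the copy inside \(R_{m_n}\) of the sub-arc of \(B_{m_n}\) joining \(a_n'\) to this trace and, if \(z\in I\), splice on the ray-continua provided by the (P2)-type property so that the whole object stays in \(R\).

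The step I expect to be the main obstacle is precisely this lifting. The delicate point is that the given ray point \(a_n\) fixes the height \(\frac1{m_n}\) at which the approximating continuum must live, whereas the arc \(A_n'\) coming from Theorem \ref{equivalencia} is only known to sit in some backbone level \(B_{k_n}\); matching these indices while keeping the splicing segments short enough that convergence to \(I\) is preserved is what requires care. I would handle this by arranging the backbone so that each \(B_n\) re-traces an increasingly fine approximation of all of \(X\)—so that, measured by a Whitney map for \(C(X)\), every arc in \(X\) is already shadowed to within \(\frac1n\) by a sub-arc of \(B_n\)—and by exploiting the dichotomy in Lemma \ref{equivalenciaCompactacion}, which frees me to verify the condition at whichever of \(a\), \(b\) yields a trace beginning near the prescribed ray point. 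This is the semi-Kelley analogue of the estimates carried out for Kelley arc continua in \cite[Theorem 3.1]{BC08}, and the dichotomy is exactly what makes the semi-Kelley version lighter than the Kelley one.
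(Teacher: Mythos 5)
Your reduction (arc / simple closed curve / indecomposable) and your choice of backbone (a dense half of the composant of \(z\), written as an increasing union of arcs) do parallel the paper, but the lifting step you yourself flag as ``the main obstacle'' is a genuine gap, and neither of your two remedies closes it. In your construction the ray re-traverses the arcs \(B_n\) at successive heights, joined end to end, so a continuum in the ray through a prescribed point \(a_n\in R_{m_n}\) must be the lift of a subarc of \(B_{m_n}\), possibly prolonged through a connector into an adjacent level. Now every proper subcontinuum of \(X\) meeting the composant of \(z\) lies inside that composant, and subcontinua of the composant are exactly the images \(s([\mu,\nu])\) of compact parameter intervals; so the arcs \(A_n'\ni a_n'\) handed to you by Theorem \ref{equivalencia} are parameter intervals \(s([\mu_n,\nu_n])\), and nothing prevents \(\nu_n\) from exceeding the parameter of the far endpoint \(w_{m_n}\) of \(B_{m_n}\). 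When this overhang occurs, truncating \(A_n'\) at \(w_{m_n}\) loses part of \(I\) in the limit, while continuing through the connector forces you to include the entire terminal segment of the adjacent level (the connector attaches only at its far endpoint \(w_{m_n\pm1}\)), an arc that need not converge into \(I\). Your first fix fails because the shadowing subarc of \(B_{m_n}\) close to \(I\) need not contain \(a_n'\), and joining it to \(a_n'\) inside \(B_{m_n}\) can insert a long arc: two subsets of the composant that are Hausdorff-close may occupy far-apart parameter intervals. Your second fix fails because the dichotomy is not available at this stage: the side (\(a\) versus \(b\)) was already fixed, uniformly, when Theorem \ref{equivalencia} was applied to \(X\); the semi-Kelley hypothesis may deliver only the \(a\)-side, so you cannot switch to \(b\) for those \(n\) where the trace is badly placed.

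The missing idea, which is exactly how the paper resolves this, is to make the ray sweep the composant \emph{once}, with height governed by a Whitney map rather than by discrete levels: take \(S\) a dense injective image of \([0,\infty)\) starting at \(z\), a Whitney map \(\omega\) with \(\omega(X)=1\), set \(R=\{(x,1-\omega(zx)):x\in S\}\subset X\times[0,1]\) and \(Y=R\cup(X\times\{0\})\), and lift \(A_n'\) simply as the component of \(\pi^{-1}(A_n')\) containing \(a_n\). Since the ray passes over each point of \(S\) exactly once, that component is the full graph over \(A_n'\) --- there is no truncation and no connector --- and it converges to \(I\) because indecomposability plus strict monotonicity of \(\omega\) force the heights over \(A_n'\) to vanish: writing \(A_n'=s([\mu_n,\nu_n])\) and \(a_n'=s(t_n)\), one has \(zs(t_n)\subset zs(\mu_n)\cup A_n'\) with \(\omega(zs(t_n))\to1\), and since \(A_n'\to I\subsetneq X\), indecomposability yields \(\lim zs(\mu_n)=X\), i.e.\ the maximal height \(1-\omega(zs(\mu_n))\) on the lifted component tends to \(0\). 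This single-pass Whitney-height construction is precisely what eliminates the index-matching problem that the multi-level imitation of Theorems \ref{chainable} and \ref{circle} (and of \cite{BC08}) cannot overcome; the discrete-level scheme is only viable when each level \emph{covers} all of \(X\), as a chain does, which an arc in \(X\) never does.
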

\begin{proof}
Let \(X\) be a semi-Kelley arc continuum. If \(X\) is decomposable, then \(X\) is an arc or a simple closed curve. If \(X\) is an arc the result follows from Theorem \ref{chainable}; if \(X\) is a simple closed curve we obtain the result from Theorem \ref{circle}. Now assume \(X\) is indecomposable. Fix \(z\in X\) and we let \(S\subset X\) be a continuous injective image of \([0,\infty)\) such that \(z\) is the image of \(0\) and \(cl(S)=X\). Let \(\omega:C(X)\to [0,1]\) be a Whitney map such that \(\omega(X)=1\), and define \(R=\{(x,1-\omega(zx))\in X\times [0,1]:x\in S\}\). Then, \(Y=R\cup (X\times\{0\})\) is a compactification of the ray \(R\) with remainder \(X\times \{0\}\).

Now, we prove that \(Y\) is semi-Kelley using Lemma \ref{equivalenciaCompactacion}. Let \(a,b\in X\times\{0\}\), let \(I\) be an irreducible proper subcontinuum of \(X\times\{0\}\) between \(a\) and \(b\), and let \(\{a_n\}_{n=1}^\infty\), \(\{b_n\}_{n=1}^\infty\) be sequences in \(Y\) converging to \(a\) and \(b\), respectively. We define two sequences \(\{a_n'\}_{n=1}^\infty\) and \(\{b_n'\}_{n=1}^\infty\) in \(X\times \{0\}\) in the following way: Consider \(\pi:Y\to X\times\{0\}\) the projection map, and let \(a_n'=\pi(a_n)\), \(b_n'=\pi(b_n)\) for each \(n\in\mathbb N\), then \(\{a_n'\}_{n=1}^\infty\) converges to \(a\) and \(\{b_n'\}_{n=1}^\infty\) converges to \(b\).

By Theorem \ref{equivalencia}, without loss of generality, assume there exists a sequence \(\{A_n'\}_{n=1}^\infty\) in \(C(X\times\{0\})\) converging to \(I\) such that \(a_n'\in A_n'\subsetneq X\times\{0\}\) for each \(n\in\mathbb N\). Now define \(A_n\) as the component of \(\pi^{-1}(A'_n)\) containing \(a_n\). Then, \(\{A_n\}_{n=1}^\infty\) is a sequence in \(C(Y)\) converging to \(I\) and satisfies \(a_n\in A_n\subset X\) or \(a_n\in A_n\subset R\) for each \(n\in\mathbb N\). By Lemma \ref{equivalenciaCompactacion}, \(Y\) is semi-Kelley. This finishes the proof.
\end{proof}

\begin{example}
	There exists an atriodic Kelley continuum which is a semi-Kelley remainder and not a Kelley remainder.
\end{example}
\begin{proof}
In defining our continuum, we will use cylindrical coordinates \((r,\theta,z)\). Define \(S^1=\{(r,\theta,0):r=1,\theta\in[0,2\pi]\}\), \(R=\{(r,\theta,0):r=1+\frac{2\pi}{\theta},\theta\geq 2\pi\}\), \(X=S^1\cup R\). By \cite[ Example 4.4 ]{BC08} \(X\) is an atriodic Kelley continuum and not a Kelley remainder. For each positive integer \(n\), define \(R_n=\{(r,\theta,\frac{1}{2n-1}):r=1+\frac{2\pi}{\theta},\theta\in [2\pi,2\pi(n+1)]\}\), \(S_n=\{(r,\theta,\frac{1}{2n}):r=1+\frac{2\pi}{\theta},\theta\in [2\pi,2\pi(n+1)]\}\), \(L_n=\{(1+\frac{1}{n+1},0,z):z\in[\frac{1}{2n},\frac{1}{2n-1}]\}\), \(J_n=\{(2,0,z):z\in[\frac{1}{2n+1},\frac{1}{2n}]\}\), and let
\(S=\bigcup_{n=1}^{\infty} (R_n\cup S_n\cup L_n\cup J_n)\). Finally, define \(Y=S\cup X\), then \(Y\) is a semi-Kelley compactification of the ray \(S\) with remainder \(X\).
\end{proof}

\section{Hereditarily semi-Kelley dendroids}

We prove that hereditarily semi-Kelley dendroids are smooth. We say that a continuum \(X\) is {\it{semi-locally connected at a point}} \(x\in X\) provided that for every open set \(U\) of \(X\) such that \(x\in U\), there exists an open set \(V\) of \(X\) such that \(x\in V\subset U\) and \(X\setminus V\) consists of a finite number of components. We say that \(X\) is {\it{colocally connected at a point}} \(x\in X\) provided that for every open set \(U\) of \(X\) such that \(x\in U\), there exists an open set \(V\) such that \(x\in V\subset U\) and \(X\setminus V\) is connected.

Given a dendroid \(X\), a point \(p\in X\) is {\it{end point}} of \(X\), if \(x\) is an end point of every arc in \(X\) to which \(x\) belongs. Denote by \(E_S(X)\) the set of end points of \(X\) at which \(X\) is semi-locally connected. By \cite[Theorem 3.5]{MK78}, \(E_S(X)\) is equal to the subset of \(X\) consisting of all points at which \(X\) is colocally connected. Let \(AE_S(X)=\cup\{ab\subset X:a,b\in E_S(X)\}\). By \cite[Theorem 4.1]{MK78}, \(AE_S(X)\) is a dense subset of \(X\).

We thank Prof. A. Illanes for a proof of the following lemma.


\begin{lemma}\label{arcs}
Let \(X\) be a dendroid different from an arc and let \(\{A_n\}_{n=1}^\infty\) be a sequence of arcs in \(X\). Then, \(X\neq\lim A_n\).
\end{lemma}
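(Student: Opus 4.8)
The plan is to argue by contradiction: I assume that some sequence of arcs $\{A_n\}_{n=1}^\infty$ satisfies $\lim A_n = X$ in $C(X)$ and derive that $X$ must be an arc, contradicting the hypothesis. The entire strategy rests on one elementary obstruction. If $v\in X$ and $x_1,x_2,x_3$ lie in three distinct components of $X\setminus\{v\}$, then each arc $x_ix_j$ must pass through $v$, so by hereditary unicoherence of the dendroid the minimal subcontinuum of $X$ containing $x_1,x_2,x_3$ is the simple triod $x_1v\cup x_2v\cup x_3v$, which embeds in no arc. Consequently \emph{no arc can contain three points lying in three distinct components of $X\setminus\{v\}$}. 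So it suffices to produce, for all large $n$, three points of $A_n$ lying in three distinct components of $X\setminus\{v\}$ for a suitably chosen branch point $v$.

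First I would locate a good triod. Since $X$ is not an arc it contains a ramification point; more usefully, because $AE_S(X)$ is dense in $X$ and $X$ is not a single arc, the set $E_S(X)$ cannot consist of at most two points, so I can extract three endpoints $e_1,e_2,e_3\in E_S(X)$ whose arcs to their median $v$ meet pairwise only in $\{v\}$. Thus $e_1,e_2,e_3$ lie in three distinct components $U_1,U_2,U_3$ of $X\setminus\{v\}$. The convergence $A_n\to X$ then yields, for each $i$ and all large $n$, a point $q_i^n\in A_n$ with $q_i^n\to e_i$. The contradiction is immediate once the $q_i^n$ eventually lie in three distinct components of $X\setminus\{v\}$: ordering the three points along the arc $A_n$, the middle one lies on the unique subarc of $A_n$ joining the two outer ones, which by unique arcwise connectedness equals the arc of $X$ between those outer points; that arc passes through $v$ and is contained in the union of the two outer components together with $\{v\}$, contradicting that the middle point lies in the third component.

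The delicate point, and what I expect to be the main obstacle, is exactly the implication ``$q_i^n$ close to $e_i \Rightarrow q_i^n$ lies in the component $U_i$,'' or at least in a component of $X\setminus\{v\}$ separated from those met near the other two endpoints. One cannot argue this by continuity of the arc-joining map $(x,y)\mapsto xy$, since that map is precisely \emph{not} continuous for a non-smooth dendroid (this is the very phenomenon the final theorem addresses). Indeed, the median of $q_1^n,q_2^n,q_3^n$ in $X$ is always the middle one of these points and therefore stays near some $e_i$ rather than near $v$, so no naive median-continuity argument is available. This is where the membership $e_i\in E_S(X)$ is essential: semi-local connectedness, equivalently colocal connectedness, of $X$ at $e_i$ together with the description of $E_S(X)$ from \cite{MK78} restricts how components of $X\setminus\{v\}$ may accumulate at $e_i$, and should produce an $\varepsilon>0$ such that every point of $X$ within $\varepsilon$ of $e_i$ lies in a component of $X\setminus\{v\}$ disjoint from every component meeting the $\varepsilon$-ball about $e_j$ for $j\neq i$.

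Establishing this separation is the heart of the argument; once it is in place, taking $n$ large enough that $d(q_i^n,e_i)<\varepsilon$ for $i=1,2,3$ places $q_1^n,q_2^n,q_3^n$ in three distinct components of $X\setminus\{v\}$, and the triod-in-arc obstruction of the first paragraph is violated. Hence no sequence of arcs converges to $X$, i.e. $X\neq\lim A_n$. I would expect the separation step to require the most care, and it is the reason the lemma is stated for dendroids that are not arcs rather than being a soft consequence of compactness of $C(X)$.
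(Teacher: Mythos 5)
Your opening reduction (three points of \(E_S(X)\) exist because \(AE_S(X)\) is dense and \(X\) is not an arc, and no arc can contain three points pairwise separated by a common branch point) matches the paper's starting point, but the step you yourself flag as ``the heart of the argument'' --- the separation claim that points \(\varepsilon\)-close to \(e_i\) lie in components of \(X\setminus\{v\}\) disjoint from the components meeting the \(\varepsilon\)-balls about the other two points --- is not merely unproven: it is false, so the plan cannot be completed in this form. Consider the comb
\[
X=\bigl([-1,1]\times\{0\}\bigr)\cup\bigl(\{0\}\times[-1,1]\bigr)\cup\bigcup_{n\ge 2}\bigl(\bigl[-1+\tfrac1n,\,1-\tfrac1n\bigr]\times\bigl\{\tfrac1n\bigr\}\bigr),
\]
a dendroid (in fact a smooth one) which is not an arc. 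Take \(e_1=(-1,0)\), \(e_2=(1,0)\), \(e_3=(0,-1)\); each lies in \(E_S(X)\), since deleting a small ball about any one of them leaves a connected set, and their median is \(v=(0,0)\). Yet the tooth tips \(x_n=(-1+\tfrac1n,\tfrac1n)\to e_1\) and \(y_n=(1-\tfrac1n,\tfrac1n)\to e_2\) are joined inside \(X\) by the tooth at height \(\tfrac1n\), an arc missing \(v\). So for every \(\varepsilon>0\) there are points \(\varepsilon\)-close to \(e_1\) and \(\varepsilon\)-close to \(e_2\) lying in the same branch of \(X\setminus\{v\}\); and if ``component'' is read topologically rather than as arc component, the situation is worse still: because the teeth accumulate on the base, \(e_1\) and \(e_2\) themselves lie in one component of \(X\setminus\{v\}\), so the three points cannot even be placed in three distinct components to begin with. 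Colocal connectedness at the \(e_i\) does not prevent any of this; the defect is intrinsic to fixing the median \(v\) once and for all, since the triod witnessing the contradiction must be allowed a moving branch point (in the comb it is \((0,\tfrac1n)\), not \(v\)).

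The paper's proof keeps your three points \(a,b,c\in E_S(X)\) but never mentions the median or components of \(X\setminus\{v\}\). Choose open sets \(U_a,U_b,U_c\) about \(a,b,c\) with pairwise disjoint closures such that \(X\setminus U_a\), \(X\setminus U_b\), \(X\setminus U_c\) are connected; this is exactly colocal connectedness, available at points of \(E_S(X)\) by \cite[Theorem 3.5]{MK78}. If \(\lim A_n=X\), some \(A_N\) meets all three sets; pick \(a'\in A_N\cap U_a\), \(b'\in A_N\cap U_b\), \(c'\in A_N\cap U_c\), and relabel so that \(b'\) lies between \(a'\) and \(c'\) on \(A_N\). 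Then \(X\setminus U_b\) is closed and connected, hence a subdendroid, hence arcwise connected, so the unique arc \(a'c'\) of \(X\) lies in \(X\setminus U_b\); but that arc is precisely the subarc of \(A_N\) from \(a'\) to \(c'\), which contains \(b'\in U_b\) --- a contradiction. The key idea missing from your outline is that colocal connectedness should be applied to the complement of a neighborhood of whichever point happens to be in the middle along \(A_N\) (the symmetric choice of all three points in \(E_S(X)\) is what legitimizes ``without loss of generality''), rather than used to control how branches at a fixed \(v\) accumulate --- which, as the comb shows, it cannot do.
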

\begin{proof}
By \cite[Theorem 4.1]{MK78}, \(AE_S(X)\) is a dense subset of \(X\), thus there exist \(a,b\in E_S(X)\), with \(a\neq b\); since \(X\) is not an arc, \(X\neq ab\), hence there exists \(c\in E_S(X)\setminus \{a,b\}\). Let \(U_a\), \(U_b\), \(U_c\) be open sets of \(X\) such that \(a\in U_a, b\in U_b, c\in U_c\), the sets \(X\setminus U_a\), \(X\setminus U_b\), and \(X\setminus U_c\) are connected, and \(cl(U_a)\), \(cl(U_b)\), \(cl(U_c)\) are pairwise disjoint. 


Assume that \(X=\lim A_n\), thus there exists \(N\in\mathbb{N}\) such that \(A_N\cap U_a\neq\emptyset\), \(A_N\cap U_b\neq\emptyset\), and \(A_N\cap U_c\neq\emptyset\). Let \(a'\in A_N\cap U_a\), \(b'\in A_N\cap U_b\), and \(c'\in A_N\cap U_c\). Assume without loss of generality that \(b'\in a'c'\subset A_N\). Notice that \(a',c'\in  X\setminus U_b\). Since \(X\setminus U_b\) is a subdendroid of \(X\), \(a'c'\subset X\setminus U_b\). Hence, \(b'\in X\setminus U_b\), contradicting \(b'\in U_b\). This finishes the proof.
\end{proof}

In \cite{GV88} E. E. Grace and E. J. Vought introduced the definitions of a Type 1 dendroid and Type 2 dendroid, where they used these conceps to prove that a dendroid is non-smooth if and only if it contains a Type 1 or Type 2 subdendroid, see \cite[Theorem 1]{GV88}. We prove that a hereditarily semi-Kelley dendroid does not contain a Type 1 subdendroid nor a Type 2 subdendroid, thus hereditarily semi-Kelley dendroids are smooth.

A dendroid \(X\) is a {\it{Type 1 dendroid}} provided that there exist points \(p,a\in X\), a sequence \(\{a_i\}_{i=1}^\infty\) in \(X\) converging to \(a\), such that \(X=cl(\bigcup_{i=1}^{\infty}pa_i)\), and the following two properties are satisfied:
\begin{enumerate}
 \item[T1.1.] the sequence \(\{pa_i\}_{i=1}^\infty\) converges to some \(L\in C(X)\),
 \item[T1.2.] there exists an end point \(s\) of \(L\), with \(s\neq a\), and there exists an open set \(U\) of \(X\) such that \(s\in U\) and \(C_s\cap (\bigcup_{i=1}^{\infty}pa_i)=\emptyset\), where \(C_s\) is the component of \(L\cap U\) containing \(s\).
\end{enumerate}

Moreover, if the following property is satisfied:

\begin{enumerate}
 \item
 [T1.3.] if \(i\neq j\), then \(pa_i\cap pa_j\cap cl(U)=\emptyset\),
\end{enumerate}
then \(X\) is called {\it{strong Type 1 dendroid}}.
The point \(p\) is called an {\it emanation point} of \(X\), the point \(a\) is called an {\it absorption point} of \(X\), and the point \(s\) is called a {\it turning point} of \(X\). Notice that \(s\neq p\), moreover, \(s\notin pa\). In the rest of the paper, if we say that \(X=L\cup (\bigcup_{i=1}^{\infty}pa_i)\) is a Type 1 dendroid, we mean that \(p\) is an emanation point, the sequence \(\{a_i\}_{i=1}^\infty\) converges to an absorption point \(a\in X\), and the sequence \(\{pa_i\}_{i=1}^\infty\) converges to \(L\).


\begin{lemma}\label{cocientet1}
Let \(X=L\cup (\bigcup_{i=1}^{\infty}pa_i)\) be a strong Type 1 dendroid with \(a=\lim a_i\). Then, the quotient \(X/pa\) is a strong Type 1 dendroid with an emanation point equal to an absorption point.
\end{lemma}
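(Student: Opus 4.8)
The plan is to realize the quotient through the natural monotone map and transport the strong Type 1 data across it. Let \(q\colon X\to X/pa\) be the quotient map and write \(p^{*}=q(pa)\) for the point to which the arc \(pa\) is collapsed. Every fiber of \(q\) is either a singleton or the subcontinuum \(pa\), so \(q\) is monotone; since a monotone image of a dendroid is a dendroid, \(X/pa\) is a dendroid. Moreover \(q\) restricts to a homeomorphism of \(X\setminus pa\) onto \((X/pa)\setminus\{p^{*}\}\), so \(q\) is injective off \(pa\) and is a local homeomorphism at every point of \(X\setminus pa\); these two facts carry essentially all the weight. The candidate new data is: emanation point \(p^{*}\), the sequence \(\{q(a_i)\}_{i=1}^{\infty}\), the arcs \(\{q(pa_i)\}_{i=1}^{\infty}\), and \(L^{*}=q(L)\). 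Since \(a\in pa\) we have \(q(a)=p^{*}\), and because \(a_i\to a\) the sequence \(q(a_i)\to p^{*}\); thus the absorption point of the new structure is exactly the emanation point \(p^{*}\), which is precisely the conclusion sought.

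First I would pin down the base representation. Because \(q\) is closed, \(q(cl(A))=cl(q(A))\) for all \(A\subset X\), so \(X/pa=q(X)=cl(\bigcup_{i} q(pa_i))\). Only finitely many \(a_i\) can lie on \(pa\): were there infinitely many, that subsequence of \(\{pa_i\}\) would lie in the closed set \(pa\) while still converging to \(L\), forcing \(L\subset pa\) and contradicting \(s\in L\setminus pa\). For the surviving indices \(a_i\notin pa\); writing \(pa_i\cap pa\) as an initial subarc \(pc_i\), the map \(q\) collapses \(pc_i\) to \(p^{*}\) and is injective on the remainder, so \(q(pa_i)\) is an arc from \(p^{*}\) to \(q(a_i)\). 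The finitely many discarded arcs map onto the single point \(p^{*}=\lim q(a_i)\), already in the closure of the surviving arcs, so \(X/pa=cl(\bigcup q(pa_i))\) still holds over the surviving indices. For (T1.1), since \(pa_i\to L\) and \(q\) is continuous and closed, \(q(pa_i)\to q(L)=L^{*}\in C(X/pa)\).

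Next I would transfer the turning–point condition (T1.2) and the strong condition (T1.3); this is where \(s\notin pa\) and the disjointness clause of the strong Type 1 definition are used. Put \(s^{*}=q(s)\); since \(s\notin pa\) we have \(s^{*}\neq p^{*}\), and as \(q\) is a homeomorphism near \(s\), the point \(s^{*}\) is an end point of \(L^{*}\). Using that \(pa\) is compact and \(s\notin pa\), choose an open set \(U_{0}\) of \(X\) with \(s\in U_{0}\subset U\) and \(cl(U_{0})\cap pa=\emptyset\), and set \(U^{*}=q(U_{0})\), an open set of \(X/pa\) containing \(s^{*}\) with \(cl(U^{*})=q(cl(U_{0}))\). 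Since \(q\) is injective off \(pa\), one gets \(L^{*}\cap U^{*}=q(L\cap U_{0})\), whence the component \(C_{s^{*}}\) of \(L^{*}\cap U^{*}\) through \(s^{*}\) equals \(q(C_{s,0})\), where \(C_{s,0}\subset C_{s}\) is the component of \(L\cap U_{0}\) through \(s\). If some point of \(C_{s^{*}}\) lay on \(q(pa_j)\), then (that point being different from \(p^{*}\)) injectivity off \(pa\) would produce a common point of \(C_{s,0}\) and \(pa_j\), contradicting \(C_{s}\cap(\bigcup pa_i)=\emptyset\); hence \(C_{s^{*}}\cap(\bigcup q(pa_i))=\emptyset\), which is (T1.2). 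The same injectivity argument over \(cl(U^{*})=q(cl(U_{0}))\), disjoint from \(p^{*}\), converts \(pa_i\cap pa_j\cap cl(U)=\emptyset\) into \(q(pa_i)\cap q(pa_j)\cap cl(U^{*})=\emptyset\) for \(i\neq j\), giving (T1.3).

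Assembling these, \(X/pa=L^{*}\cup(\bigcup q(pa_i))\) is a strong Type 1 dendroid whose emanation point \(p^{*}\) and absorption point \(q(a)=p^{*}\) coincide. The main obstacle I anticipate is bookkeeping around the collapsed point rather than any deep topology: one must repeatedly invoke that \(q\) is injective off \(pa\) so that images of disjoint sets stay disjoint, remembering that the implication \(q(E)\cap q(F)\neq\emptyset\Rightarrow E\cap F\neq\emptyset\) is valid only when the witnessing point avoids \(p^{*}\). The two spots most requiring care are checking that collapsing the initial subarc of each \(pa_i\) genuinely yields an arc (handled by the finiteness of degenerate indices) and confirming that \(s^{*}\) remains an end point of the collapsed continuum \(L^{*}\) with the component structure of \(L^{*}\cap U^{*}\) intact.
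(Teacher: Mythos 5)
Your proof is correct and takes essentially the same route as the paper: push the strong Type 1 data (\(p\), \(\{a_i\}\), \(L\), \(s\), \(U\)) through the monotone quotient map, observe that \(q(p)=q(a)\) makes the emanation and absorption points coincide, and verify T1.1--T1.3 for the images. The paper's version simply shrinks \(U\) so that \(pa\cap U=\emptyset\) and asserts that the pushed-forward data satisfy T1.1--T1.3; your extra care (requiring \(cl(U_0)\cap pa=\emptyset\), discarding the finitely many indices with \(a_i\in pa\), and invoking injectivity of \(q\) off \(pa\)) just supplies the details the paper leaves to the reader.
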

\begin{proof}
Let \(s\in X\) and let \(U\) be an open set of \(X\) satisfying T1.2 and T1.3; since \(s\notin pa\), we can assume that \(pa\cap U=\emptyset\).
Let \(\pi:X\to X/pa\) be the quotient function. Since \(\pi\) is a monotone function, \(X/pa\) is a dendroid by \cite[Corollary 13.41]{N92}. Let \(\overline{p}=\pi(p)\), \(\overline{a}=\pi(a)\), \(\overline{a}_i=\pi(a_i)\) for each \(i\in\mathbb N\), \(\overline{L}=\pi(L)\), \(\overline{s}=\pi(s)\), and \(\overline{U}=\pi(U)\). Notice that \(\overline{p}=\overline{a},\{\overline{a}_i\}_{i=1}^\infty,\overline{s},\overline{L},\) and \(\overline{U}\) satisfy T1.1, T1.2, and T1.3, thus \(X/pa\) is a strong Type 1 dendroid.
\end{proof}

\begin{lemma}\label{strong1}
Let \(X=L\cup (\bigcup_{i=1}^{\infty}pa_i)\) be a strong Type 1 dendroid such that \(p=\lim a_i\). Then, there exists a subsequence \(\{a_{N_i}\}_{i=1}^{\infty}\) such that \(\lim (pa_{N_i}\cap L)=M\), for some \(M\subsetneq L\), and \(Y=L\cup (\bigcup_{i=1}^{\infty}pa_{N_i})\) is a strong Type 1 dendroid with a turning point \(e\notin M\).
\end{lemma}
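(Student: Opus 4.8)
The plan is to realize each intersection $pa_i\cap L$ as a subarc of $L$, pass to a convergent subsequence, and then locate a turning point of the resulting subdendroid off the limit of these subarcs. Since $X$ is a dendroid, $pa_i\cap L$ is a subcontinuum of the arc $pa_i$ that contains $p$ (note $p\in L$ because $pa_i\to L$ and $p\in pa_i$ for all $i$), hence an initial subarc $pq_i$ of $pa_i$, where $q_i$ is the last point of $pa_i$, starting from $p$, that lies in $L$. By compactness of $C(X)$ I would pass to a subsequence $\{a_{N_i}\}$ for which $pa_{N_i}\cap L=pq_{N_i}\to M$ and $q_{N_i}\to q_0$, with $M\in C(L)$ and $p,q_0\in M$. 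Because $pa_{N_i}\to L$, every limit point of $\bigcup pa_{N_i}$ that does not already lie on one of the arcs lies in $L$, so $Y=L\cup\bigcup pa_{N_i}=cl(\bigcup pa_{N_i})$ is a subdendroid satisfying T1.1 with the same $L$ and with $a=p=\lim a_{N_i}$. It then remains to exhibit an endpoint $e\neq p$ of $L$ with $e\notin M$ that serves as a (strong) turning point of $Y$; such an $e$ automatically forces $M\subsetneq L$.

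I would first settle the properness $M\subsetneq L$. If $L$ is an arc, then $s$ is one of its two endpoints and the component $C_s$ of $L\cap U$ containing $s$ is a relatively open subarc-tail of $L$ at $s$; since $C_s\cap\bigcup pa_i=\emptyset$, the $L$-open set $C_s$ meets no $pq_{N_i}$, so $s\notin M=\lim pq_{N_i}$, whence $M\subsetneq L$. In this case $e=s$ with $U_e=U$ already inherits T1.2 and T1.3 from the hypothesis (both conditions pass to subcollections), finishing the arc case outright. If $L$ is not an arc, then the subarcs $pq_{N_i}$ form a sequence of arcs in the dendroid $L$, and Lemma \ref{arcs} applied to $L$ gives $\lim pq_{N_i}=M\neq L$, i.e. $M\subsetneq L$.

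With $M\subsetneq L$ in hand I would choose the turning point among the endpoints at which $L$ is semi-locally connected. Since $M$ is a proper subcontinuum of $L$ with $p\in M$, not all of $E_S(L)$ can lie in $M$: otherwise $M$, being an arcwise connected subcontinuum, would contain every arc joining two points of $E_S(L)$, hence $AE_S(L)$, and therefore its closure $L$ by \cite{MK78}, a contradiction. So I pick $e\in E_S(L)\setminus M$, and then $e\neq p$. Because $e\notin M=\lim pq_{N_i}$, I choose a neighborhood $U_e$ with $cl(U_e)\cap M=\emptyset$, so that $pq_{N_i}\cap cl(U_e)=\emptyset$ for all large $i$; discarding finitely many terms, the component $C_e$ of $L\cap U_e$ containing $e$ satisfies $C_e\cap pa_{N_i}=C_e\cap pq_{N_i}=\emptyset$ for every remaining $i$, which is precisely T1.2 for $Y$ with turning point $e$.

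The main obstacle is the strong condition T1.3 for this new turning point, namely $pa_{N_i}\cap pa_{N_j}\cap cl(U_e)=\emptyset$ for $i\neq j$. Since $pa_{N_i}\cap L\cap cl(U_e)=\emptyset$, any violation must come from two tails $pa_{N_i}\setminus L$ and $pa_{N_j}\setminus L$ sharing a point of $cl(U_e)$ near $e$, that is, from their common initial subarc reaching into $cl(U_e)$. I would control this by shrinking $U_e$ and using that $e$ is an endpoint with $e\notin pa_{N_i}$ for large $i$, together with the colocal connectedness of $L$ at $e\in E_S(L)$ furnished by \cite{MK78}, to keep the branch points of the arcs bounded away from $e$, so that for a small enough $U_e$ the arcs meet $cl(U_e)$ only in pairwise disjoint tails. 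The delicate point is exactly verifying that these branch points stay off a neighborhood of $e$, i.e. that the arcs cannot cluster together as they approach $e$ through $X\setminus L$; once this is established, $Y=L\cup\bigcup pa_{N_i}$ is a strong Type 1 dendroid with a turning point $e\notin M$, completing the proof.
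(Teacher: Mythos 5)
Your construction matches the paper's proof almost step for step up to the last stage: extracting a subsequence with $pa_{N_i}\cap L\to M$ by compactness of $C(X)$, proving $M\subsetneq L$ by splitting into the arc case (where $C_s$ is open in $L$ and misses every $pa_i$) and the non-arc case (via Lemma \ref{arcs}), choosing $e\in E_S(L)\setminus M$ by the density of $AE_S(L)$ from \cite{MK78}, and verifying T1.2 by taking $cl(U_e)$ disjoint from $M$. Your arc case, where $e=s$ and both T1.2 and T1.3 are inherited by the subfamily, is also fine. The genuine gap is T1.3 for the new turning point $e$, and you flag it yourself: the claim that "the branch points stay off a neighborhood of $e$" is never established. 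Worse, the strategy you propose, keeping the subsequence fixed and shrinking $U_e$, cannot work in general. The hypothesis T1.3 controls the pairwise intersections $pa_{N_i}\cap pa_{N_j}$ only near the \emph{old} turning point $s$; nothing prevents the set $\bigcup_{i\neq j}\left(pa_{N_i}\cap pa_{N_j}\right)$ from accumulating at $e$, and if it does, no neighborhood of $e$, however small, satisfies T1.3 for that fixed family of arcs. One is forced to discard infinitely many arcs, i.e., to pass to a further subsequence, which the statement of the lemma permits.

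That further extraction is exactly what the paper does, and it needs no control of branch points and no colocal connectedness at $e$. Having fixed $V\ni e$ with $cl(V)\cap M=\emptyset$ and an index $l_1$ beyond which $pa_{k_i}\cap L\subset X\setminus cl(V)$, observe that for each \emph{fixed} index $l_j$ one has $pa_{k_{l_j}}\cap pa_{k_i}\cap cl(V)=\emptyset$ for all sufficiently large $i$: otherwise points $x_i$ in these intersections would cluster, by compactness, at some $x\in pa_{k_{l_j}}\cap cl(V)$ which also lies in $\lim pa_{k_i}=L$, contradicting $pa_{k_{l_j}}\cap L\cap cl(V)=\emptyset$. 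This allows an inductive choice $l_1<l_2<\cdots$ with $pa_{k_{l_i}}\cap pa_{k_{l_j}}\cap cl(V)=\emptyset$ for all $i\neq j$, which is T1.3 for $N_i=k_{l_i}$. Note that the compactness argument here is the same one you already used implicitly to get $pq_{N_i}\cap cl(U_e)=\emptyset$ for large $i$ in T1.2, so the repair is short; but as written, with the subsequence frozen and only the neighborhood shrinking, your proof of the "strong" condition does not close, and that condition is the whole point of the lemma (it is what Lemma \ref{type1} later consumes).
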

\begin{proof}

Let \(s\in L\) be a turning point of \(X\), and let \(U\) be open in \(X\) satisfying T1.2 and T1.3.

Since \(C(X)\) is compact, there exists a subsequence \(\{a_{k_i}\}_{i=1}^\infty\)  such that \(\lim (pa_{k_i}\cap L)=M\), for some \(M\subset L\). We prove that \(M\neq L\); if \(L\) is an arc, then \(M\subset L\setminus C_s\), since \(C_s\) is an open set of \(L\) containing \(s\) and \(pa_i\cap C_s=\emptyset\) for each \(i\in\mathbb N\), thus \(M\neq L\); if \(L\) is not an arc, since \(pa_{k_i}\cap L\) is an arc of \(L\) for each \(i\in \mathbb{N}\), and \(M=\lim (pa_{k_i}\cap L)\), by Lemma \ref{arcs}, \(M\neq L\) .

If \(s\notin M\), then \(e=s\) and \(\{a_{N_i}\}_{i=1}^\infty=\{a_{k_i}\}_{i=1}^\infty\) satisfies the required conclusion. Assume that \(s\in M\). By \cite[Theorem 4.1]{MK78}, \(AE_S(L)\) is a dense subset of \(L\). Hence, there exists \(e\in E_S(L)\) such that \(e\in L\setminus M\). Let \(V\) be an open set of \(X\) such that \(e\in V\) and \(cl(V)\cap M=\emptyset\). 
Since \(X\setminus cl(V)\) is an open set of \(X\) containing \(M\), there exists \(l_1\in\mathbb{N}\) such that \((pa_{k_i}\cap L)\subset X\setminus cl(V)\) and \(pa_{k_i}\cap V\neq \emptyset\) for each \(i\geq l_1\), and observe that \(pa_{k_i}\cap L\cap cl(V)=\emptyset\).

 Notice that there exists \(l_2>l_1\) such that for each \(i\geq l_2\), \(pa_{k_{l_1}}\cap pa_{k_i}\cap\ cl(V)=\emptyset\), otherwise \(pa_{k_{l_1}}\cap L\cap cl(V)\neq\emptyset\), which is a contradiction to the choice of \(l_1\). Similarly, there exists \(l_3>l_2\) such that for each \(i\geq l_3\), \(pa_{k_{l_2}}\cap pa_{k_i}\cap\ cl(V)=\emptyset\). Continuing inductively, we define an increasing sequence \(\{l_j\}_{j=1}^{\infty}\) such that for each \(j\in\mathbb N\), and for each \({i}\geq l_{j+1}\), \(pa_{k_{l_j}}\cap pa_{k_i}\cap\ cl(V)=\emptyset\). Hence, the sequence \(\{a_{k_{l_i}}\}_{i=1}^{\infty}\) satisfies that \(pa_{k_{l_i}}\cap pa_{k_{l_j}}\cap\ cl(V)=\emptyset\) for \(i\neq j\).

Define for each \(i\in\mathbb{N}\), \(N_i=k_{l_i}\) and define \(Y=L\cup (\bigcup_{i=1}^{\infty}pa_{N_i})\). Notice that \(Y=cl(\bigcup_{i=1}^\infty) pa_{N_i}\),
the sequence \(\{a_{N_i}\}_{i=1}^\infty\) is a sequence in \(Y\) converging to \(p\), the sequence \(\{pa_{N_i}\}_{i=1}^\infty\) is a sequence in \(C(Y)\) converging to \(L\). Define \(W=V\cap Y\), observe that \(W\) is an open set of \(Y\) such that \(e\in W\), let \(C_e\) be the component of \(L\cap W\) containing \(e\). We prove that \(W\) satisfies properties T1.2 and T1.3, with \(e\in E_S(L)\setminus M\) a turning point of \(Y\). We prove T1.2; since \((\bigcup_{i=N}^{\infty}pa_{k_i})\cap L\subset X\setminus cl(V)\), we have \((\bigcup_{i=1}^{\infty}pa_{N_i})\cap L\cap W=\emptyset\). Hence, \(C_e\cap (\bigcup_{i=1}^{\infty}pa_{N_i})=\emptyset\). We prove T1.3; by its definition, we have \(pa_{N_i}\cap pa_{N_j}\cap\ cl(W)\subset pa_{N_i}\cap pa_{N_j}\cap\ cl(V)=\emptyset\), for each \(i,j\in\mathbb N\) with \(i\neq j\). We conclude that \(Y\) is a strong Type 1 dendroid with a turning point \(e\notin M\).
\end{proof}

For a subset \(A\subset Y\subset X\) , let \(cl_Y(A)\) and \(bd_Y(A)\) denote the closure of \(A\) in \(Y\) and the boundary of \(A\) in \(Y\), respectively.
\begin{lemma}\label{type1}
 Let \(X=L\cup (\bigcup_{i=1}^{\infty}pa_i)\) be a strong Type 1 dendroid with \(p=\lim a_i\). 
  Then, \(X\) is not hereditarily semi-Kelley.
\end{lemma}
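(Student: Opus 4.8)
The plan is to reduce the statement to the sharpened configuration provided by Lemma \ref{strong1} and then to violate the criterion of Theorem \ref{equivalencia} inside the resulting subcontinuum. First I would apply Lemma \ref{strong1} to pass to a subsequence \(\{a_{N_i}\}\) and work with \(Y=L\cup(\bigcup_{i=1}^{\infty}pa_{N_i})\in C(X)\); this \(Y\) is again a strong Type 1 dendroid, but it now carries the extra data \(M=\lim(pa_{N_i}\cap L)\subsetneq L\) together with a turning point \(e\in E_S(L)\setminus M\) and an open set \(W\) of \(Y\) with \(e\in W\) satisfying T1.2 and T1.3. Since \(Y\) is a subcontinuum of \(X\), it suffices to prove that \(Y\) is \emph{not} semi-Kelley. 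The geometric input is the ``strand picture'' near \(e\): because \(e\notin M\), for large \(i\) the arc \(pa_{N_i}\) meets \(L\) only in a set close to \(M\), so near \(e\) the piece \(S_i=pa_{N_i}\cap W\) lies off \(L\); by T1.2 the component \(C_e\) of \(L\cap W\) containing \(e\) misses every \(pa_{N_i}\), and by T1.3 the strands \(S_i\) are pairwise disjoint in \(cl(W)\). Thus near \(e\) the space \(Y\) is the arc \(C_e\) with a null sequence of pairwise disjoint strands accumulating onto it, while globally each strand is forced, because \(a_{N_i}\to p\) and \(pa_{N_i}\to L\), to run back toward \(p\).

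The aim is then to exhibit a subcontinuum \(K\subseteq Y\) together with two \emph{incomparable} maximal limit continua (equivalently, to produce an irreducible \(I\in C(Y)\) and sequences at both of its end points for which the alternative of Theorem \ref{equivalencia} fails), adapting the fan argument of \cite[Theorem 6.4]{I21}. The natural source of non-semi-Kelleyness is the non-smoothness of \(Y\), which is visible as the ``overshoot'' \(\lim pa_{N_i}=L\) occurring while \(a_{N_i}\to p\): the limit of the arcs \(pa_{N_i}\) is the whole of \(L\) rather than a degenerate set. Feeding a strand sequence \(d_i\in S_i\) with \(d_i\to e\) into Theorem \ref{equivalencia} at the end point \(e\), one should be able to convert the disjointness T1.3 into the obstruction: the two maximal limit continua are the one obtained by approaching \(e\) \emph{through} \(L\) along \(C_e\) and the one obtained by approaching \(e\) \emph{along the strands} \(S_i\), which by T1.2 is pinned away from \(C_e\).

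The crux, and the step I expect to be the main obstacle, is the verification of maximality and incomparability, because individually each strand does admit a good approximating sequence (each \(pa_{N_i}\) converges to \(L\)), so the failure cannot come from a single strand and must be extracted from their interaction. Concretely, one must show that no sequence \(\{A_i\}\) in \(C(Y)\) with \(d_i\in A_i\) can converge to the chosen \(I\) while the alternative at the \(M\)-end also fails; here T1.3 is essential, since it forbids distinct strands from meeting inside \(cl(W)\) and so blocks the amalgamation of a single continuum through the \(d_i\) that would fill the target, and T1.2 guarantees that the strands never touch \(C_e\), so any such continuum that does reach \(C_e\) is forced out of \(W\) and down toward \(p\). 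Making these two ``cannot be enlarged'' statements precise is exactly the content of incomparability of the two maximal limit continua, and it is the place where the turning-point hypotheses are used in an essential way; once it is in place, Theorem \ref{equivalencia} (or directly the definition of semi-Kelley) shows that \(Y\), and hence \(X\), is not hereditarily semi-Kelley.
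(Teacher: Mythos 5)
Your reduction ``it suffices to prove that \(Y\) is not semi-Kelley'' is the fatal step: in general \(Y=L\cup(\bigcup_{i=1}^{\infty}pa_{N_i})\) \emph{is} semi-Kelley, so no argument confined to \(Y\) with its full strands can close. Consider the model in which \(L\) is a segment from \(p\) to \(s\) and each strand \(pa_i\) is a thin hairpin running from \(p\) alongside \(L\) up to near \(s\), turning, and coming back down to its endpoint \(a_i\) near \(p\); this satisfies T1.1--T1.3 with \(p=\lim a_i\), yet one can check that \(Y\) is Kelley (hence semi-Kelley). The reason is exactly the phenomenon you flag yourself: any continuum through a point \(d_i\) of a strand can be enlarged \emph{along that strand}, and since each strand sweeps out all of \(L\), the enlarged continua converge to arbitrarily large subarcs of \(L\) containing \(e\). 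Consequently the two candidates you propose (approach to \(e\) through \(C_e\) and approach along the strands) produce nested, not incomparable, maximal limit continua; T1.2 and T1.3 alone cannot block the enlargements, because they constrain the strands only inside \(cl(U)\) while the enlargements escape through \(p\). You acknowledge the difficulty (``the failure cannot come from a single strand and must be extracted from their interaction'') but the proposal contains no mechanism that performs this extraction, and inside \(Y\) no such mechanism exists.

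The paper's proof supplies precisely the missing device: it abandons \(Y\) and builds a proper subcontinuum with \emph{alternately truncated} strands. After fixing nested neighborhoods \(U_z\subset U_y\subset U\) of the turning point with \(U_y\cap L=(y,s]\) and \(U_z\cap L=(z,s]\), it cuts each even-indexed strand at the first point \(y_i=\inf(ps_i\cap bd_Y(U_y))\) where it reaches \(bd_Y(U_y)\), and each odd-indexed strand at the first point \(z_i=\inf(s_ib_i\cap bd_Y(U_z))\) past \(s_i\), and sets \(Z=(\bigcup_{i=1}^{\infty}py_{2i+N})\cup(\bigcup_{i=1}^{\infty}pz_{2i+1+N})\cup L\). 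Inside \(Z\) the enlargements that ruined your argument are no longer available: a continuum of \(Z\) through an even tip \(y_{2i+N}\) either lies in \(u_{2i+N}y_{2i+N}\subset Z\setminus U_y\) (so its limit misses \(z'\)) or contains \(u_{2i+N}y_{2i+N}\) (so its limit contains \(u'\notin(x,s]\)); a continuum through an odd tip \(z_{2i+1+N}\) either lies in \(s_{2i+1+N}z_{2i+1+N}\subset cl_Y(U_z)\) (so its limit misses \(y'\)) or contains it (so its limit contains \(s\notin y'z'\)). Thus neither alternative of Theorem \ref{equivalencia} holds for the irreducible arc \(y'z'\) with the two tip sequences, and \(Z\) is not semi-Kelley. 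This construction of \(Z\), with strands cut at two different levels in alternation, is the essential idea absent from your proposal; without it, the incomparability you aim for cannot be established.
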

\begin{proof}
 For points \(x,y\in X\), we write \(x\leq_p y\) provided that \(x\in py\), and we write \(x<_p y\) provided that \(x\leq_p y\) and \(x\neq y\). For \(x<_p y\), define \((x,y]=\{t\in xy:x<_p t\leq_p y\}=xy\setminus\{x\}\). If \(A\subset px\), for some \(x\in X\), \(\inf(A)\) means the greatest lower bound of \(A\) in \(px\) with the order \(\leq _p\).

By Lemma \ref{strong1}, there exists a subsequence \(\{a_{N_i}\}_{i=1}^{\infty}\) such that \(\lim (pa_{N_i}\cap L)=M\), for some \(M\subsetneq L\), and \(Y=L\cup (\bigcup_{i=1}^{\infty}pa_{N_i})\) is a strong Type 1 dendroid with a turning point \(s\notin M\). For each \(i\in\mathbb{N}\), define \(b_i=a_{N_i}\), thus \(Y=L\cup (\bigcup_{i=1}^{\infty}pb_i)\). Let \(U\) be an open set of \(Y\) such that \(s\in U\) and \(U\) satisfies T1.2 and T1.3; since \(s\notin M\), we can choose \(U\) such that \(M\cap cl_Y(U)=\emptyset\). Notice that \(Y\setminus cl_Y(U)\) is an open set of \(Y\) containing \(M\) and \(s\in\lim pb_i\), hence, there exists \(N\in\mathbb{N}\) such that for each \(i\geq N\), \((pb_i\cap L)\subset Y\setminus cl_Y(U)\) and \(pb_i\cap U\neq\emptyset\).






For each \(i\geq N\), define \(u_i=\inf (pb_i\cap bd_Y(U))\), and notice that \(u_i\in bd_Y(U)\).  Taking subsequence if necessary, we may assume that the sequence \(\{u_i\}_{i=N}^\infty\) is convergent in \(Y\) and the sequence \(\{u_ib_i\}_{i=N}^\infty\) is convergent in \(C(Y)\).

\textbf{Claim 1.} For each \(i\geq N\), \(u_ib_i\) is an arc without ramification points of \(Y\).

{\it Proof of Claim 1.}

Assume \(v\in u_ib_i\) is a ramification point of \(Y\). Thus there exist \(w\in Y\setminus pb_i\) such that \(pb_i\cap pw=pv\). If \(w\in pb_j\) for some \(i\neq j\), then \(u_i\in pb_i\cap pb_j\cap cl_Y(U)\), which contradicts T1.3.
Hence, \(w\in L\), since \(pw\subset L\), \(u_i\in pv=pb_i\cap pw\subset pb_i\cap L\subset Y\setminus cl_Y(U)\), which contradicts the definition of \(u_i\). Claim 1 is proved.

Define \(A=\lim u_ib_i\), note that \(A\subset L\). By a similar argument as in \cite[Theorem 6.1]{I21}, \(A\) is an arc.

Let \(\{s_i\}_{i=1}^\infty\) be a sequence in \(Y\) converging to \(s\); without loss of generality assume that \(s_i\in pb_i\cap U\) for each \(i\geq N\). Since \(s_i\in pb_i\cap U\), we have that \(s_i\in u_ib_i\) for each \(i\geq N\), thus \(s\in\lim u_ib_i=A\). Moreover, since \(\lim b_i=p\), \(p\in \lim u_ib_i=A\). Hence, \(ps\subset A\) and \(s\) is an end point of \(A\).

 \textbf{Claim 2.} \(L \cap U=A\cap U\).

{\it Proof of Claim 2.}

Since \(A\subset L\), then \(A\cap U\subset L\cap U\).
Let \(w\in L\cap U\), then \(w\in\lim pb_i\), for which there exists a sequence \(\{w_i\}_{i=1}^\infty\) such that \(w_i\in pb_i\cap U\) and \(\lim w_i=w\). Hence, \(w_i\in u_ib_i\) and \(w\in \lim u_ib_i=A\). Claim 2 is proved.

Let \(C_s\) be the component of \(L\cap U\) containing \(s\). By Claim 2, \(C_s\) is the component of \(A\cap U\) containing \(s\). Since \(A\) is an arc with an end point \(s\) and \(p\in A\setminus U\), we have that there exists \(x\in ps\) such that \(C_s=(x,s]\). Let \(q\in A\) be the end point of \(A\) with \(q\neq s\), notice that \(A\setminus (x,s]= px\cup pq\). Since \(px\cup pq\) is a closed set of \(Y\) we have that \(A\setminus (x,s]\) is a closed set of \(A\). Therefore, \((x,s]\) is an open set of \(A\). Let \(W\) be open set of \(Y\) such that \(W\cap A=(x,s]\). Notice that \((W\cap U)\cap L\) is an open set of \(L\) and \((W\cap U)\cap L=W\cap(U\cap L)=W\cap (U\cap A)=(W\cap A)\cap U=(x,s]\cap U=(x,s]\), thus  \((x,s]\) is an open set of \(L\). Notice that for each \(y\in (x,s]\setminus\{s\}\), \(L\setminus(y,s]=(L\setminus (x,s])\cup xy\) is a closed set of \(L\), hence, \((y,s]\) is an open set  of \(L\).

Let \(y,z\in (x,s]\) be such that \(x<_p y<_p z<_p s\). Since \((y,s]\) and \((z,s]\) are open sets of \(L\), there are \(U_y\) and \(U_z\) open sets of \(Y\) such that \(U_y\cap L=(y,s]\), \(U_z\cap L=(z,s]\), and \(cl_Y(U_z)\subset U_y\subset cl_Y(U_y)\subset U\cap W\). Define for each \(i\geq N\), \(y_i=\inf(ps_i\cap bd_Y(U_y))\) and  \(z_i=\inf(s_ib_i\cap bd_Y(U_z))\), observe that \(y_i\in bd_Y(U_y)\) and \(z_i\in bd_Y(U_z)\). Taking subsequences if necessary, we may assume that the sequences \(\{y_i\}_{i=N}^\infty\) and \(\{z_i\}_{i=N}^\infty\) are convergent in \(Y\) and the sequences \(\{py_i\}_{i=N}^\infty\) and \(\{pz_i\}_{i=N}^\infty\)  are convergent in \(C(Y)\). Let  \(\lim u_i=u'\), \(\lim y_i=y'\), and \(\lim z_i=z'\).

\textbf{Claim 3.} \(u'\notin (x,s] \), \(y'\in (x,y]\), and \(z'\in (y,z]\).

{\it Proof of Claim 3.}

We prove that \(y'\in (x,y]\). Since for each \(i\geq N\), \(y_i\in pb_i\cap bd_Y(U_y)\), we have that \(y'\in L\cap bd_Y(U_y)\subset L\cap cl_Y(U_y)\subset L\cap U\cap W=(x,s]\). Assume \(y'\in (y,s]\), as \((y,s]\subset U_y\), there exists \(k\geq N\) such that \(y_k\in U_y\), which contradicts that  \(y_k\in bd_Y(U_y)\). Thus \(y'\in (x,y]\). A similar argument shows that \(u'\notin (x,s]\) and \(z'\in (y,z]\). Claim 3 is proved.

Define \(Z=(\bigcup_{i=1}^{\infty}py_{2i+N})\cup(\bigcup_{i=1}^{\infty}pz_{2i+1+N})\cup L\); since \(\lim py_{2i+N}\subset L\) and \(\lim pz_{2i+1+N}\subset L\), we have that \(Z\) is a subcontinuum of \(Y\).

\begin{figure}[h]
  \centering
  \includegraphics[width=9cm]{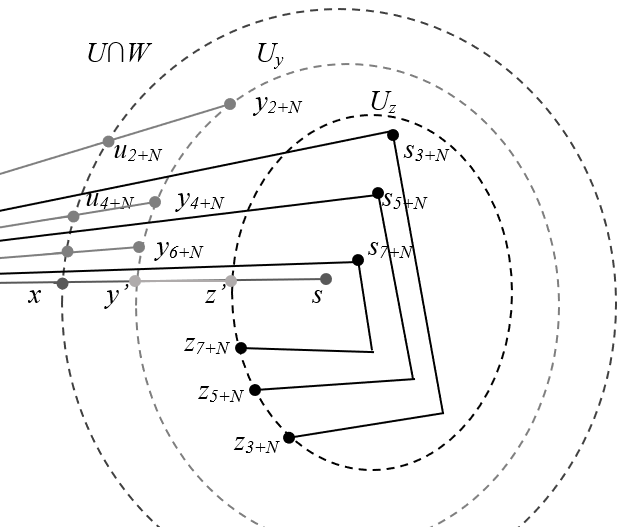}
  \caption{The dendroid \(Z\) around \(s\).}\label{}
\end{figure}

 We will prove that \(Z\) is not semi-Kelley. Notice that \(y',z'\in Z\), \(\lim y_{2i+N}=y'\), \(\lim z_{2i+1+N}=z'\), and \(y'z'\subset (x,z]\subset Z\) is irreducible between \(y'\) and \(z'\). Let \(\{A_i\}_{i=1}^\infty\) be any convergent sequence in \(C(Z)\) such that \(y_{2i+N}\in A_i\) for each \(i\in\mathbb N\).

\textbf{Claim 4.} The sequence \(\{A_i\}_{i=1}^\infty\) does not converge to \(y'z'\).

{\it Proof of Claim 4.}

Notice that \(A_i\subset u_{2i+N}y_{2i+N}\subset Z\setminus U_y\) for an infinite number of \(i's\) or \(u_{2i+N}y_{2i+N}\subset A_i\) for an infinite number of \(i's\). In the first case \(\lim A_i\subset Z\setminus U_y\), since \(z'\notin Z\setminus U_y\) we have that \(\lim A_i\neq y'z'\).  In the second case \(u'\in\lim A_i\), since \(u'\notin (x,z]\) and \(y'z'\subset (x,z]\), we have that \(\lim A_i\neq y'z'\). Claim 4 is proved.

Let \(\{B_i\}_{i=1}^\infty\) be any convergent sequence in \(C(Y)\) such that \(z_{2i+1+N}\in B_i\) for each \(i\in\mathbb N\).

\textbf{Claim 5.} The sequence \(\{B_i\}_{i=1}^\infty\) does not converge to \(y'z'\).

{\it Proof of Claim 5.}

Notice that \(B_i\subset s_{2i+1+N}z_{2i+1+N}\subset cl_Y(U_z)\) for an infinite number of \(i's\) or \(s_{2i+1+N}z_{2i+1+N}\subset B_i\) for an infinite number of \(i's\). In the first case \(\lim B_i\subset cl_Y(U_z)\subset U_y\); since \(y'\notin U_y\) we have that \(\lim B_i\neq y'z'\).  In the second case \(s\in\lim B_i\); since \(s\notin y'z'\)  we have that \(\lim B_i\neq y'z'\). Claim 5 is proved.

By Claim 4, Claim 5, and Theorem \ref{equivalencia}, \(Z\) is not semi-Kelley, thus \(X\) is not hereditarily semi-Kelley. The proof is complete.
\end{proof}

\begin{lemma}\label{type11}
Let \(X\) be a hereditarily semi-Kelley dendroid. Then, \(X\) does not contain a Type 1 subdendroid.
\end{lemma}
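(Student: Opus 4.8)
The plan is to argue by contraposition: assuming \(X\) contains a Type 1 subdendroid, I will exhibit a subcontinuum of \(X\) that is not semi-Kelley. Since every subcontinuum of a hereditarily semi-Kelley continuum is again hereditarily semi-Kelley, it suffices to work inside the Type 1 subdendroid itself, so I may as well take \(X\) to be a Type 1 dendroid and show it is not hereditarily semi-Kelley. First I would reduce to the strong case: passing to a subdendroid (selecting a subsequence of \(\{a_i\}_{i=1}^\infty\) that realizes condition T1.3 while preserving T1.1 and T1.2, a refinement available from the work of Grace and Vought \cite{GV88}), I may assume \(T=L\cup(\bigcup_{i=1}^\infty pa_i)\) is a \emph{strong} Type 1 dendroid with absorption point \(a=\lim a_i\). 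If \(a=p\), then \(p=\lim a_i\) and Lemma \ref{type1} applies directly. So the substantive case is \(a\neq p\).

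In that case the idea is to pass through the quotient. By Lemma \ref{cocientet1} the monotone quotient \(Q=T/pa\) is a strong Type 1 dendroid in which the emanation point and the absorption point coincide, i.e. \(\overline p=\overline a=\lim \overline{a}_i\). Hence Lemma \ref{type1} applies to \(Q\) and produces a subcontinuum \(Z\) of \(Q\) that is not semi-Kelley. More precisely, inspecting that proof, the failure of semi-Kelleyness of \(Z\) is witnessed through Theorem \ref{equivalencia} by two points \(y',z'\), by the irreducible arc \(y'z'\subset Z\) between them, and by sequences \(y_i\to y'\) and \(z_i\to z'\) for which no sequence of subcontinua of \(Z\) through the \(y_i\) (respectively through the \(z_i\)) converges to \(y'z'\). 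All of these objects lie in a neighborhood of the turning point, and since in \(Q\) the turning point is, by definition, distinct from the absorption point \(\overline a=\overline p\), this neighborhood can be taken to avoid \(\overline p\); thus every witness avoids \(\overline p\).

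The core step is then to \emph{lift} this failure from \(Q\) back to \(T\). Let \(\pi\colon T\to Q\) be the quotient map; it is monotone and collapses exactly the arc \(pa\) to \(\overline p\), so \(\pi\) is one-to-one off \(pa\), \(\pi^{-1}\) of any subcontinuum is a subcontinuum, and the induced map \(A\mapsto\pi(A)\) on hyperspaces is continuous. Set \(W=\pi^{-1}(Z)\), a subcontinuum of \(T\). Because the witnesses avoid \(\overline p\), each has a unique preimage \(\tilde y',\tilde z',\tilde y_i,\tilde z_i\in W\), the arc \(\tilde y'\tilde z'\) is the irreducible subcontinuum of \(W\) between \(\tilde y'\) and \(\tilde z'\), and \(\pi(\tilde y'\tilde z')=y'z'\). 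If some sequence \(\{A_i\}_{i=1}^\infty\) in \(C(W)\) with \(\tilde y_i\in A_i\) converged to \(\tilde y'\tilde z'\), then \(\{\pi(A_i)\}_{i=1}^\infty\) would be a sequence in \(C(Z)\) with \(y_i\in\pi(A_i)\) converging to \(y'z'\), contradicting the choice of \(Z\); the same argument applies to the \(\tilde z_i\). By Theorem \ref{equivalencia}, \(W\) is not semi-Kelley, and as \(W\subset T\subset X\) this contradicts hereditary semi-Kelleyness, completing the proof.

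The step I expect to be the main obstacle is the lifting: one must be sure that the witnesses to non-semi-Kelleyness produced in \(Q\) genuinely avoid the collapsed arc \(pa\), for only then do they possess honest preimages in \(T\) and does the projected-subcontinua argument close up. This is what forces the turning point to differ from \(\overline p=\overline a\), which is exactly why the quotient of Lemma \ref{cocientet1} (placing emanation and absorption at the same point) is the right device to invoke before applying Lemma \ref{type1}. A secondary technical point deserving care is the initial reduction to the strong case, i.e. extracting from \(\{a_i\}_{i=1}^\infty\) a subsequence satisfying T1.3 without destroying T1.1 and T1.2.
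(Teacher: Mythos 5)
Your proposal is correct, and it follows the paper's skeleton (reduce via \cite{GV88} to a strong Type 1 subdendroid, split on whether the emanation and absorption points coincide, and in the non-coincident case pass to the quotient by \(pa\) using Lemma \ref{cocientet1} so that Lemma \ref{type1} becomes applicable), but it diverges from the paper at the decisive step. The paper transfers the hypothesis \emph{downward}: it proves a Claim that \(X/pa\) is hereditarily semi-Kelley, by taking any \(W\in C(X/pa)\), noting \(\pi^{-1}(W)\) is semi-Kelley, and invoking \cite[Theorem 8]{FP19} to see that collapsing \(pa\) preserves the property of being semi-Kelley; the contradiction with Lemma \ref{type1} then lives entirely in the quotient, and Lemma \ref{type1} is used as a black box. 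You instead transfer the conclusion \emph{upward}: you apply Lemma \ref{type1} inside \(Q=T/pa\) and lift the non-semi-Kelley witness back through \(\pi^{-1}\). Your lifting is sound: \(\pi\) restricted to \(T\setminus pa\) is a homeomorphism onto \(Q\setminus\{\overline p\}\), the induced hyperspace map is continuous, and the arc \(\tilde y'\tilde z'\) must lie in \(\pi^{-1}(y'z')\) (hence avoid \(pa\)), so a convergent sequence in \(C(W)\) through the lifted points would project to one contradicting Claims 4 and 5 of the paper's proof of Lemma \ref{type1}. What your route buys is independence from \cite[Theorem 8]{FP19} — the argument becomes self-contained modulo the internals of Lemma \ref{type1}. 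What it costs is exactly that dependence on internals: the statement of Lemma \ref{type1} only asserts non-hereditary semi-Kelleyness, so you must reopen its proof to confirm that every witness (\(y'\), \(z'\), the \(y_i\), \(z_i\), and the arc \(y'z'\)) avoids the collapsed point. You flag this correctly, and it does check out — in the paper's proof of Lemma \ref{type1} one has \(cl_Y(U)\cap M=\emptyset\) and \(p\in M\), so \(cl_Y(U)\), which contains all the witnesses, automatically misses \(p\) — but a fully rigorous write-up of your version would record this as a strengthened restatement of Lemma \ref{type1} rather than a citation of it.
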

\begin{proof}
Assume that \(X\) contains a Type 1 subdendroid \(Y\). By \cite[Lemma 2]{GV88}, each Type 1 dendroid contains a strong Type 1 dendroid. Let \(Z=L\cup (\bigcup_{i=1}^{\infty}pa_i)\subset Y\subset X\)  be a strong Type 1 dendroid with \(a=\lim a_i\). If \(p=a\), then by Lemma \ref{type1}, \(Z\) is not hereditarily semi-Kelley, which is a contradiction. If \(p\neq a\), then by Lemma \ref {cocientet1}, \(Z/pa\) is a strong Type 1 subdendroid of \(X/pa\).

\textbf{Claim.} \(X/pa\) is hereditarily semi-Kelley.

{\it Proof of Claim.}

Let \(\pi:X\to X/pa\) be the quotient function. Let \(W\in C(X/pa)\),  since  \(\pi\) is a monotone map we have that \(\pi^{-1}(W)\in C(X)\). Hence, \(\pi^{-1}(W)\) is semi-Kelley. Notice that \(\pi(p)\in W\) or  \(\pi(p)\notin W\). In the first case  \(pa\subset\pi^{-1}(W)\) and \(W\) is homeomorphic to \(\pi^{-1}(W)/pa\). Therefore, by \cite[Theorem 8]{FP19} \(\pi^{-1}(W)/pa\) is semi-Kelley, and so \(W\) is semi-Kelley.  In  the second case \(W\) is homeomorphic to \(\pi^{-1}(W)\). Hence, \(W\) is semi-Kelley. Therefore, \(X/pa\) is hereditarily semi-Kelley. The Claim is proved.

  Notice that \(Z/pa\) is a strong Type 1 subdendroid of \(X/pa\), with an emanation point \(\pi(p)=\pi(a)\). Then, by Lemma \ref{type1}, \(Z/pa\) is not hereditarily semi-Kelley, which contradicts the Claim. The proof is complete.
 \end{proof}

Recall that a dendroid \(X\) is a {\it{Type 2 dendroid}} provided that there exist points \(s,a,b,t\in X\), sequences \(\{a_i\}_{i=1}^\infty\), \(\{b_i\}_{i=1}^\infty\) in \(X\) such that \(X=(\bigcup_{i=1}^{\infty}sa_i)\cup sa\cup ab\cup bt\cup(\bigcup_{i=1}^{\infty}tb_i)\), and the following properties are satisfied:
\begin{enumerate}
 \item[T2.1.] \(\lim a_i=a\) and \(\lim b_i=b\),
 \item[T2.2.] \(\lim sa_i=sa\) and \(\lim tb_i=tb\),
 \item[T2.3.] \(\lim [{\text{diam}}(sa_i\cap st)]=0=\lim [{\text{diam}}(tb_i\cap st)]\), and
 \item[T2.4.] the points \(a,b\in st\) and \(s<a<b<t\) in the natural order from \(s\) to \(t\).
\end{enumerate}

\begin{lemma}\label{type2}
Let \(X\) be a hereditarily semi-Kelley dendroid. Then, \(X\) does not contain a Type 2 subdendroid.
\end{lemma}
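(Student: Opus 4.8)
The plan is to establish the lemma by showing that the hypothesised Type 2 subdendroid \(Y=(\bigcup_{i=1}^\infty sa_i)\cup sa\cup ab\cup bt\cup(\bigcup_{i=1}^\infty tb_i)\) is itself not semi-Kelley; since \(Y\in C(X)\), this contradicts the assumption that \(X\) is hereditarily semi-Kelley. To detect the failure I would use Theorem \ref{equivalencia}: it suffices to produce two points, an irreducible continuum between them, and one approximating sequence at each point, in such a way that \emph{neither} sequence can be lifted to a sequence of subcontinua converging to that irreducible continuum. The natural candidate is \(I=ab\), the unique arc of the dendroid \(Y\) joining the two absorption points \(a\) and \(b\) (hence irreducible between them), together with the Type 2 sequences \(\{a_i\}\to a\) and \(\{b_i\}\to b\).

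First I would extract the geometric content of T2.3. Since \(s\in sa_i\cap st\) and the intersection of two subcontinua of a dendroid is a subarc, \(sa_i\cap st\) is a subarc \(sc_i\) of \(st\) containing \(s\); as \(\mathrm{diam}(sa_i\cap st)\to 0\) this forces \(c_i\to s\), and symmetrically \(tb_i\cap st=d_it\) with \(d_i\to t\). The same estimate shows \(a_i\notin st\) and \(b_i\notin st\) for large \(i\) (otherwise \(sa_i\subset st\) would have diameter near \(\mathrm{diam}(sa)\neq 0\)), so each \(a_i\) genuinely lies on a branch \(sa_i\) meeting \(st\) only along \(sc_i\), close to \(s\). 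For large \(i\) we also have \(c_i<a<b<d_i\) in the order from \(s\) to \(t\).

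Next I would prove that \(\{a_i\}\) does not lift to \(ab\). Suppose \(\{A_i\}\subset C(Y)\) satisfies \(a_i\in A_i\) and \(A_i\to ab\). Because \(b\in ab\), choose \(p_i\in A_i\) with \(p_i\to b\); then \(A_i\supset a_ip_i\). The crux is that this arc must run through the detachment point \(c_i\): the arc \(a_ip_i\) leaves the branch \(sa_i\) at some point \(e_i\), and if \(e_i\) lay strictly beyond \(c_i\) on the branch, the divergence would have to be produced by another of the arcs \(sa_k\) sharing \(sa_i\) past \(c_i\) and reaching the neighbourhood of \(b\); but such an \(sa_k\) would detach from \(st\) at \(c_i\) while its tip \(a_k\to a\), which is incompatible with \(c_i\to s\) except for finitely many indices. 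Hence \(e_i=c_i\) for large \(i\), so \(c_i\in A_i\); since \(c_i\to s\) we get \(s\in\lim A_i=ab\), contradicting \(s\notin ab\). The argument that \(\{b_i\}\) does not lift is symmetric, replacing \(c_i,s\) by \(d_i,t\) and using \(t\notin ab\). With both sequences non-liftable, Theorem \ref{equivalencia} shows that \(Y\) is not semi-Kelley, which is the desired contradiction.

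The main obstacle is precisely this branch-point step: ruling out that some subcontinuum through \(a_i\) reaches a neighbourhood of \(b\) \emph{without} returning near \(s\). This is where the asymptotic condition T2.3 (through \(c_i\to s\) and \(d_i\to t\)) must be combined with the tree structure of the dendroid, and where one must control the finitely many ``early'' arcs whose detachment points are bounded away from \(s\); I expect that passing to a tail of the indices, so that all relevant detachment points are as close to \(s\) (respectively \(t\)) as desired, will neutralise them. In contrast with the Type 1 case of Lemma \ref{type1}, no quotient or auxiliary construction is needed, because the two absorption points \(a\) and \(b\) already furnish the two incomparable limit behaviours that break semi-Kelley.
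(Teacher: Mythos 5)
Your overall frame coincides with the paper's: apply Theorem \ref{equivalencia} to \(I=ab\) with the Type 2 sequences \(\{a_i\}\) and \(\{b_i\}\), and derive the contradiction \(s\in ab\) (resp.\ \(t\in ab\)) from any lifting \(\{A_i\}\to ab\) with \(a_i\in A_i\). However, your crux step fails. You argue that if the arc \(a_ip_i\) leaves the branch \(sa_i\) at a point \(e_i\) strictly beyond the detachment point \(c_i\), then the early branch \(sa_k\) responsible for the divergence ``would detach from \(st\) at \(c_i\) while its tip \(a_k\to a\), which is incompatible with \(c_i\to s\) except for finitely many indices,'' and conclude \(e_i=c_i\) for large \(i\). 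There is no incompatibility here. If infinitely many \(sa_i\) share with a fixed \(sa_k\) a segment lying off \(st\), then connectedness of \(sa_k\cap st\) forces \(c_k\leq_s c_i\) for all those \(i\), and since \(c_i\to s\) this forces \(c_k=s\) --- which is a perfectly admissible Type 2 configuration: a single branch \(sa_k\) meeting \(st\) only at \(s\), wandering close to \(b\), with infinitely many branches \(sa_i\) sprouting from it at points \(w_i\to s\) just off \(st\). In that configuration \(e_i=w_i\neq c_i\) for infinitely many \(i\), so the step ``hence \(e_i=c_i\) for large \(i\)'' is simply false, and with it your derivation of \(c_i\in A_i\). (The conclusion is repairable: in that configuration \(e_i\in sa_k\cap sa_i\), so every accumulation point of \(\{e_i\}\) lies in \(sa_k\cap\limsup sa_i\subset sa_k\cap sa\subset sa_k\cap st=\{s\}\), whence \(e_i\to s\) and \(s\in\lim A_i\) anyway; but this argument, not the one you give, is what must be supplied. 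You would also need the preliminary subsequencing making \(\bigcup sa_i\) and \(\bigcup tb_j\) disjoint, which you never carry out: without it the divergence at \(e_i\) could instead occur along an early branch \(tb_j\), which may even contain all of \(st\) and protrude past \(s\).)

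The paper avoids this entire case analysis with a cut-point device you might adopt. Fix \(c\in ab\setminus\{a,b\}\); after passing to a tail of both sequences of branches (so that \(\bigcup sa_i\) and \(\bigcup tb_i\) are disjoint and all detachment arcs avoid \(c\)), the point \(c\) separates \(Y\) into \(sc\cup\bigcup sa_i\) and \(ct\cup\bigcup tb_i\). Since \(A_i\to ab\) contains points near \(a\) and near \(b\), connectedness forces \(c\in A_i\) eventually, hence \(a_ic\subset A_i\). Because this arc \emph{ends on} \(st\), it must meet \(st\), and its first meeting point \(s_i=\inf(a_ic\cap st)\) lies in \(sa_i\cap st\) (a short computation with the branch point of \(a_i\) and \(c\) over \(s\)); by T2.3, \(s_i\to s\), giving \(s\in\lim A_i=ab\), a contradiction. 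The advantage of anchoring the arc at a fixed cut point of \(st\), rather than at a variable point \(p_i\to b\), is precisely that the arc cannot dodge \(st\) by traveling through shared branch segments, which is the possibility your argument mishandles.
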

\begin{proof}
Assume that \(X\) contains a Type 2 subdendroid \(Y\). Let \(s,a,b,t\in Y\) and let \(\{a_i\}_{i=1}^\infty, \{b_i\}_{i=1}^\infty\) be sequences in \(Y\) such that \(Y=(\bigcup_{i=1}^{\infty}sa_i)\cup sa\cup ab\cup bt\cup(\bigcup_{i=1}^{\infty}tb_i)\) and T2.1-T2.4 are satisfied. Since \(sa\cap bt=\emptyset\), taking a subsequence if necessary, we can assume that \(\bigcup_{i=1}^\infty sa_i\) and \(\bigcup_{i=1}^\infty tb_i\) are disjoint, and \(a_i,b_i\notin st\) for each \(i\in\mathbb N\). Since \(X\) is hereditarily semi-Kelley, \(Y\) is a semi-Kelley. As the sequences \(\{a_i\}_{i=1}^\infty\) and \(\{b_i\}_{i=1}^\infty\) in \(Y\) converge to \(a\) and \(b\), respectively, \(ab\) is irreducible between \(a\) and \(b\), by Theorem \ref{equivalencia}, without loss of generality, there exists a sequence \(\{A_i\}_{i=1}^\infty\) in \(C(Y)\) converging to \(ab\) such that \(a_i\in A_i\) for each \(i\in\mathbb N\). Let \(c\in ab\setminus\{a,b\}\), notice that \(Y=(sc\cup\bigcup_{i=1}^{\infty}sa_i)\cup (ct\cup\bigcup_{i=1}^{\infty}tb_i)\) and \(\{c\}=(sc\cup\bigcup_{i=1}^{\infty}sa_i)\cap (ct\cup\bigcup_{i=1}^{\infty}tb_i)\), thus \(Y\setminus\{c\}\) is not connected, moreover, \(c\) separates \(a\) and \(b\) in \(Y\). Since \(\lim A_i=ab\), and \(c\) separates \(a\) and \(b\) in \(Y\), there exists \(N\in\mathbb{N}\) such that \(c\in A_i\) for each \(i\geq N\); hence, \(a_ic\subset A_i\) for each \(i\geq N\). Define \(s_i=\inf(a_ic\cap st)\) with the order \(\leq _s\) of the arc \(st\), defined by \(x\leq_s y\) provided that \(x\in sy\). Notice that \(s_i\in sa_i\cap st\) and \(s_i\in A_i\), for each \(i\geq N\); since \(\lim [{\text{diam}}(sa_i\cap st)]=0\), the sequence \(\{s_i\}_{i=1}^\infty\) converges to \(s\), thus \(s\in ab\), contradicting that \(s<a<b\) in the natural order from \(s\) to \(t\). The proof is complete.
\end{proof}

\begin{theorem}
Let \(X\) be a hereditarily semi-Kelley dendroid. Then, \(X\) is smooth.
\end{theorem}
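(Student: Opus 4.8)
The plan is to obtain this theorem as an immediate consequence of the two preceding lemmas together with the structural dichotomy of Grace and Vought. Recall from \cite[Theorem 1]{GV88} that a dendroid is non-smooth if and only if it contains a Type 1 subdendroid or a Type 2 subdendroid. Hence, to conclude that a hereditarily semi-Kelley dendroid $X$ is smooth, it suffices to exclude both alternatives.

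The argument I would give proceeds by contradiction. First I would assume that $X$ is not smooth. Since $X$ is a dendroid, the Grace--Vought characterization \cite[Theorem 1]{GV88} applies and forces $X$ to contain either a Type 1 subdendroid or a Type 2 subdendroid. However, Lemma \ref{type11} shows that a hereditarily semi-Kelley dendroid contains no Type 1 subdendroid, and Lemma \ref{type2} shows that it contains no Type 2 subdendroid. As $X$ is hereditarily semi-Kelley, both lemmas apply to $X$, so neither type can occur. This contradicts the conclusion of \cite[Theorem 1]{GV88}, and therefore $X$ must be smooth.

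The only points requiring attention at this final stage are purely bookkeeping: confirming that the standing hypothesis (that $X$ is a dendroid which is hereditarily semi-Kelley) is exactly what both lemmas require, and that Lemmas \ref{type11} and \ref{type2} between them rule out \emph{both} members of the Grace--Vought dichotomy, leaving no third possibility for a non-smooth dendroid. There is no genuine obstacle remaining here; all of the substantive work has already been carried out in establishing the non-existence of each type of subdendroid in Lemmas \ref{type11} and \ref{type2}, so the present theorem is essentially a corollary of those two lemmas and the cited characterization.
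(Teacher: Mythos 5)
Your proof is correct and is essentially identical to the paper's own argument: both apply Lemma \ref{type11} and Lemma \ref{type2} to exclude Type 1 and Type 2 subdendroids, and then invoke the Grace--Vought characterization \cite[Theorem 1]{GV88} to conclude smoothness. Phrasing it as a contradiction rather than directly is an immaterial difference.
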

\begin{proof}
By Lemma \ref{type11} and Lemma \ref{type2}, \(X\) does not contain a Type 1 subdendroid nor a Type 2 subdendroid. By \cite[Theorem 1]{GV88}, \(X\) is a smooth dendroid.
\end{proof}

The converse of the previous Corollary is not true by \cite[Example 13]{C03}. The following unsolved questions arose naturally in our study of semi-Kelley dendroids.
\begin{question}
Let \(X\) be a dendroid. Is it true that there exists a semi-Kelley dendroid \(Y\) such that \(X\subset Y\)?
\end{question}

\begin{question}
Let \(X\) be a (semi-)Kelley continuum, let \(Y\in C(X)\) and let \(U\) be an open set of \(X\) such that \(Y\subset U\). Is it true that there exists a (semi-)Kelley continuum \(Z\in C(X)\) such that \(Y\subset Z\subset U\)?
\end{question}

The main unsolved question about semi-Kelley continua is the following, originally stated in \cite[Question 5.16]{CC98}.
\begin{question}
Let \(X\) be a semi-Kelley continuum. Is it true that \(C(X)\) is contractible?
\end{question}

\end{document}